\DeclareMathAlphabet{\pazocal}{OMS}{zplm}{m}{n}
\numberwithin{equation}{section}
\newtheorem{theorem}{Theorem}[section]
\newtheorem{lemma}[theorem]{Lemma}
\newtheorem{proposition}[theorem]{Proposition}
\newtheorem{remark}[theorem]{Remark}
\newtheorem{claim}[theorem]{Claim}
\newcommand{\bbE}{{\ensuremath{\mathbb E}} }
\newcommand{\bbN}{{\ensuremath{\mathbb N}} }
\newcommand{\bbP}{{\ensuremath{\mathbb P}} }
\newcommand{\bbR}{{\ensuremath{\mathbb R}} }
\newcommand{\bbZ}{{\ensuremath{\mathbb Z}} }
\newcommand{\cA}{{\ensuremath{\pazocal A}} }
\newcommand{\cB}{{\ensuremath{\pazocal B}} }
\newcommand{\cC}{{\ensuremath{\pazocal C}} }
\newcommand{\cD}{{\ensuremath{\pazocal D}} }
\newcommand{\cE}{{\ensuremath{\pazocal E}} }
\newcommand{\cH}{{\ensuremath{\pazocal H}} }
\newcommand{\cO}{{\ensuremath{\pazocal O}} }
\newcommand{\cR}{{\ensuremath{\pazocal R}} }
\newcommand{\cS}{{\ensuremath{\pazocal S}} }
\newcommand{\gd}{\delta}
\newcommand{\bep}{\overline{\epsilon}}
\newcommand{\gep}{\varepsilon}       % \ge already exists...
\newcommand{\gl}{\lambda}
\renewcommand{\tilde}{\widetilde}          % wider `tilde'
\DeclareMathSymbol{\leqslant}{\mathalpha}{AMSa}{"36} % nicer `smaller or equal'
\DeclareMathSymbol{\geqslant}{\mathalpha}{AMSa}{"3E} % nicer `larger or equal'
\DeclareMathSymbol{\eset}{\mathalpha}{AMSb}{"3F}     % nicer `emptyset'
\newcommand{\dd}{\text{\rm d}}             % a straight d for differentials
\newcommand{\sumtwo}[2]{\sum_{\substack{#1 \\ #2}}} % sum with 2 lines
\newcommand{\one}{{\mathchoice {1\mskip-4mu\mathrm l}
         {1\mskip-4mu\mathrm l}
         {1\mskip-4.5mu\mathrm l}
         {1\mskip-5mu\mathrm l}}}
\newcommand{\R}{\mathbb{R}}
\newcommand{\Z}{\mathbb{Z}}
\newcommand{\N}{\mathbb{N}}
\newcommand{\PEfont}{\mathrm}
\def\p{\ensuremath{\PEfont P}}
\def\e{\ensuremath{\PEfont E}}
\newcommand{\E}{\e}
\renewcommand{\P}{\p}
\renewcommand{\epsilon}{\varepsilon}
\renewcommand{\rho}{\varrho}
\renewcommand{\phi}{\varphi}
\newcommand{\cpl}[1]{{#1}^{c}}
\newcommand{\con}[3]{#1 \stackrel{#2}{\longleftrightarrow}#3}
\newenvironment{myenumerate}{%
\renewcommand{\theenumi}{\arabic{enumi}}%
\renewcommand{\labelenumi}{{\rm(\theenumi)}}%
\begin{list}{\labelenumi}
	{%
	\setlength{\itemsep}{0.4em}%
	\setlength{\topsep}{0.5em}%
	\setlength\leftmargin{2.45em}%
	\setlength\labelwidth{2.05em}%
	\setlength{\labelsep}{0.4em}%
	\usecounter{enumi}%
	}%
	}%
{\end{list}
}
\renewenvironment{enumerate}{
\begin{myenumerate}}%
{\end{myenumerate}}
\newenvironment{myitemize}{%
\begin{list}{$\bullet$}%
 	{%
	\setlength{\itemsep}{0.4em}%
	\setlength{\topsep}{0.5em}%
	\setlength\leftmargin{2.45em}%
	\setlength\labelwidth{2.05em}%
	\setlength{\labelsep}{0.4em}%
%	\usecounter{enumi}%
	}%
	}%
{\end{list}}
\renewenvironment{itemize}{
\begin{myitemize}}%
{\end{myitemize}}
\def\dd{\mathrm{d}}
\newcommand{\Leb}{\text{\rm Leb}} 
\newcommand{\out}{\text{\rm out}} 
\newcommand{\ins}{\text{\rm in}} 
\newcommand{\ext}{\text{\rm ext}} 
\newcommand{\cross}{\text{\rm Cross}}
\newcommand{\conn}{\text{\rm conn}}
\definecolor{light-gray}{gray}{0.5}
\def\cAJ{\cA_{R,\epsilon,\bep}} 
\begin{document}

%%%%%%%%%%%%%%%%%%%%%%%%%%%%%%%%%%%%%%%%%%%%%%%%%%%%%%%%%%%%%%%%%%%%%%%%%%%%%%
%%%%%%%%%%%%%%%  Title, author, affiliation, date %%%%%%%%%%%%%%%%%%%%%%%%%%%%
%%%%%%%%%%%%%%%%%%%%%%%%%%%%%%%%%%%%%%%%%%%%%%%%%%%%%%%%%%%%%%%%%%%%%%%%%%%%%%

\title{Brownian Paths Homogeneously Distributed in Space: Percolation Phase Transition and Uniqueness of the Unbounded Cluster}

\author{
Dirk Erhard
\footnotemark[1]
\\
Juli\'{a}n Mart\'inez
\footnotemark[2]
\\
Julien Poisat
\footnotemark[3]
}

\footnotetext[1]{
Mathematics Institute,
Warwick University,
Coventry, CV4 7AL, UK,\\
{\sl D.Erhard@warwick.ac.uk}
}
\footnotetext[2]{
Instituto de Investigaciones Matem\'aticas Luis A. Santal\'o, Conicet,
C1428EGA, Buenos Aires, Argentina,\\
{\sl jmartine@dm.uba.ar} and {\sl martinez@math.leidenuniv.nl}
}

\footnotetext[3]{
CEREMADE, Universit\'e Paris-Dauphine, UMR CNRS 7534, Place du Mar\'echal de Lattre de Tassigny, 75775 CEDEX-16 Paris, France,\\
{\sl poisat@ceremade.dauphine.fr}
}

\date{\today}
\maketitle

\begin{abstract}
We consider a continuum percolation model on $\R^d$, $d\geq 1$.
For $t,\lambda\in (0,\infty)$ and $d\in\{1,2,3\}$, the occupied set is given by 
the union of independent Brownian paths running up to time $t$ whose
initial points form a Poisson point process with intensity $\lambda>0$.
When $d\geq 4$, the Brownian paths are replaced by Wiener sausages
with radius $r>0$.\\
We establish that, for $d=1$ and all choices of $t$, no percolation occurs,
whereas for $d\geq 2$, there is a non-trivial percolation transition
in $t$, provided $\lambda$ and $r$ are chosen properly.
The last statement means that $\lambda$ has to be chosen to be strictly smaller than the critical percolation parameter for the occupied set at time zero
(which is infinite when $d\in\{2,3\}$, but finite and dependent on $r$ when $d\geq 4$).
We further show that for all $d\geq 2$, the unbounded cluster in the supercritical phase is unique.\\
Along the way a finite box criterion for non-percolation in the Boolean model is extended to radius distributions with an exponential tail. 
This may be of independent interest.
The present paper settles the basic properties of the model and should be viewed as a jumpboard for finer results.

%%%%%%%%%%%%%%%%%%%%%%%%%%%%%%%%%%%%%%%%%%%%%%%%%%%%%%%%%%%%%%%%%%%%%%%%%%%%%%
%%%%%%%%%%%%%%%%%%%%% Here the document begins %%%%%%%%%%%%%%%%%%%%%%%%%%%%%%%
%%%%%%%%%%%%%%%%%%%%%%%%%%%%%%%%%%%%%%%%%%%%%%%%%%%%%%%%%%%%%%%%%%%%%%%%%%%%%%
\medskip\noindent

{\it MSC 2010.} Primary 60K35, 60J65, 60G55; Secondary 82B26.\\
{\it Key words and phrases.} Continuum percolation, Brownian motion,
Poisson point process, phase transition, Boolean percolation.\\
{\it Acknowledgments.} DE and JP were supported by ERC Advanced Grant 267356 VARIS. JP held a postdoc position at the Mathematical Institute of Leiden University during the preparation of this paper.
JM was
supported by Erasmus Mundus scholarship BAPE-2009-1669.
The authors are grateful to R. Meester and M. Penrose for providing unpublished
notes, which already contain a sketch of the proof of Proposition \ref{lem:continuity}. They thank J.-B. Gou\'{e}r\'{e} for valuable comments on
the preliminary version as well as an anonymous referee for suggesting improvements in the presentation of the paper. JM is grateful to S. Lopez for valuable discussions.
\end{abstract}

\newpage

%\tableofcontents

%%%%%%%%%%%%%%%%%%%%%%%%%%%%%%%%%%%%%%%%%%%%%%%%%%%%%%%%%%%%%%%%%%%%%%%%%%%%%%%%%%%%%%%%%%%%%%

%%%%%%%%%%%%%%%%%%%%%%%%%%%%%%%%%%%%%%%%%%%%%%%%%%%%%%%%%%%%%%%%%%%%%%%%%%%%%%%%%%%%%%%%%%%%%%

\section{Introduction}
\label{S1}

{\it Notation.} For every $d\geq1$, we denote by $\Leb_d$ the Lebesgue measure on $\R^d$. 
$||\cdot||$ and $||\cdot||_{\infty}$ stand for the Euclidean norm and supremum norm on $\R^d$, respectively.
For any set $A$, the symbols $\cpl{A}$ and $\overline{A}$ refer to the complement set and the closure of $A$ respectively.
The open ball with center $z$ and radius $r$ with respect to the Euclidean norm is denoted by $\cB(z,r)$, whereas $\cB_{\infty}(z,r)$ stands for the 
same ball with respect to the supremum norm. 
Furthermore, for every $0<r<r'$, we denote by $\cA(r,r') = \cB(0,r')\setminus \overline{\cB}(0,r)$ and $\cA_{\infty}(r,r') = \cB_{\infty}(0,r')\setminus
\overline{\cB}_{\infty}(0,r)$ the annulus delimited by the balls of radii $r$ and $r'$
with respect to the Euclidean norm and supremum norm, respectively. 
%Given a $d$-dimensional Brownian motion $(B_t)_{t\geq 0}$, we denote its $i$-th component by $(B_{t,i})_{t\geq 0}$, for $i\in\{1,2,\ldots,d\}$.
For all $I \subseteq \bbR^+$, we denote by $B_I$ the set $\{B_t,\, t\in I\}$.
The symbol $\bbP^a$ denotes the law of a Brownian motion starting at $a$. Finally,
$\bbP^{a_1,a_2}$ denotes the law of two independent Brownian motions starting at $a_1$ and $a_2$, respectively. 

\subsection{Overview}
\label{S1.1}
\par For $\lambda>0$, let $(\Omega_p, \pazocal{A}_p, \P_{\lambda})$ be a probability space on which a Poisson point process $\cE$ with intensity $\lambda \times
\Leb_d$ is defined. Conditionally on $\cE$, we fix a collection of independent Brownian motions $\{(B_t^x)_{t\geq 0},\, x\in \cE\}$ such that
for each $x\in\cE$, $B_0^x = x$ and $(B_t^x-x)_{t\geq0}$ is independent of $\cE$. We study for $t,r\geq0$ the {\it occupied} set (see Figure \ref{simulations} below):
\begin{equation}\label{def:occB}
\cO_{t,r} := \left\{
\begin{array}{ll}
 \bigcup_{x\in\cE} \bigcup_{0\leq s \leq t}\, \cB(B_s^x, r),\quad &\mbox{if }r>0,\\
 \bigcup_{x\in\cE} B_{[0,t]}^x,\quad &\mbox{if }r=0.
 \end{array}\right.
\end{equation}
In the rest of the paper, we write $\cO_{t}$ instead of $\cO_{t,{ 0}}$.
From now on we will denote by $\P$ the probability measure on the space where $\cO_{t,r}$ is defined, see Remark \ref{rem:ergodic}.
\begin{remark}
\label{rem:ergodic}
A more rigorous definition of the model described above can be done along similar lines as in Section 1.4 of \cite{MR96} for the Boolean percolation model. One consequence of that construction is the ergodicity of $\cO_{t,r}$ with respect to shifts in space. 
\end{remark}

\begin{figure}[h]
\begin{center}
\begin{tabular}{ccc}
\includegraphics[height=3cm]{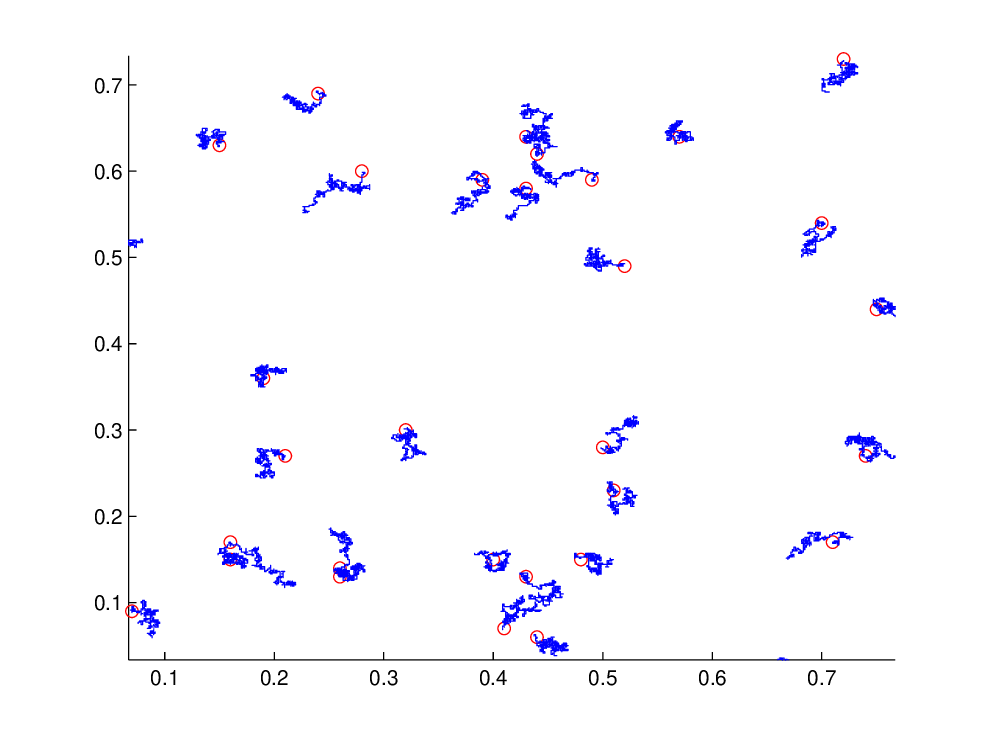} &
\includegraphics[height=3cm]{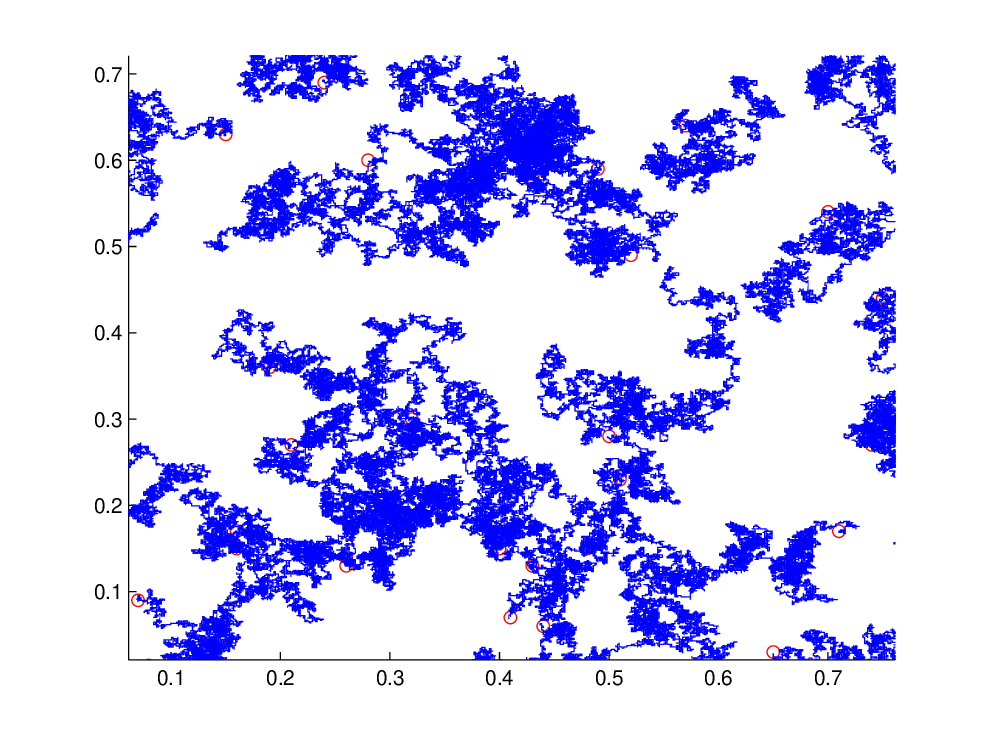} &
\includegraphics[height=3cm]{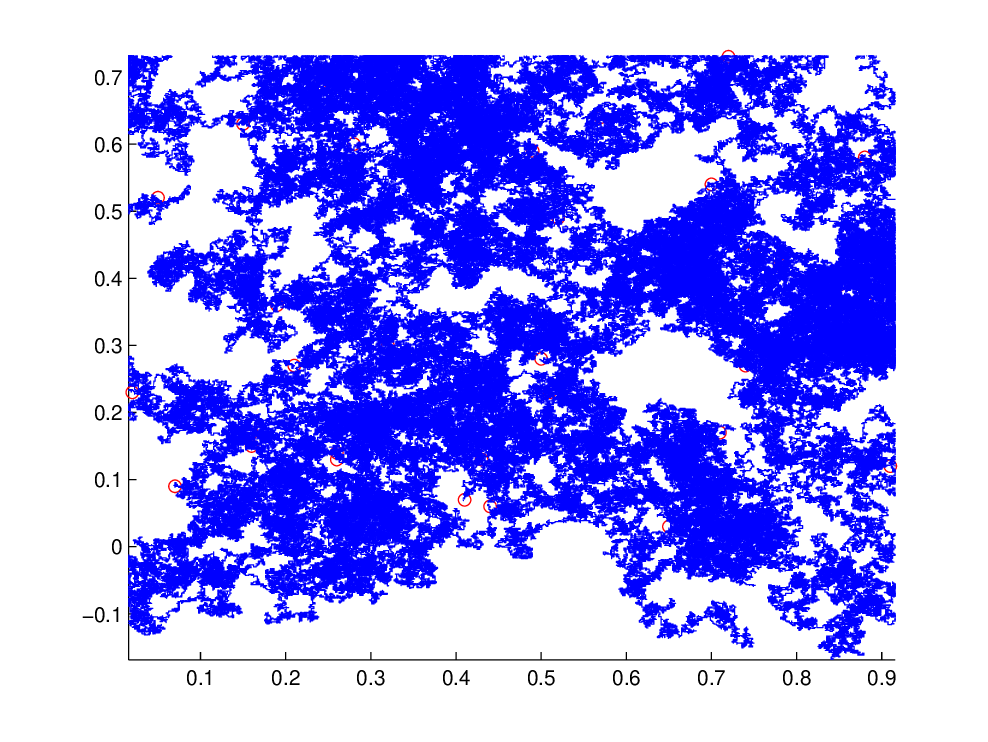}
\end{tabular}
\end{center}
\caption{\small
Simulations of $\cO_t$ in the case $d=2$, at small, intermediate, and large times.
}
\label{simulations}
\end{figure}

\par Two points $x$ and $y$ of $\bbR^d$ are said to be connected in $\cO_{t,r}$ if and only if there exists a continuous function $\gamma: [0,1]\mapsto
\cO_{t,r}$ such that $\gamma(0)=x$ and $\gamma(1)=y$. A subset of $\cO_{t,r}$ is connected if and only if all of its points are pairwise connected. In the
following a connected subset of $\cO_{t,r}$ is called a component. 
A component is bounded if it is contained in $\cB(0,R)$ for some $R>0$.
Otherwise, the component is said to be unbounded. 
A {\it cluster} is a connected component which is maximal in the sense that it is not strictly contained in another connected component.
Clusters will be denoted by $C$ all over this work. 
We say that our model percolates if $\cO_{t,r}$ contains at least one unbounded cluster.\\

\par We are interested in the percolative properties of the occupied set: is there an unbounded cluster for large $t$? Is it unique? What happens for small $t$?
Since an elementary monotonicity argument shows that $t\mapsto\cO_{t,r}$ is non-decreasing, the first and the third question may be rephrased as follows: is there a percolation transition in $t$?

\subsection{Results}
\label{S1.2}
We fix $\lambda>0$.

\begin{theorem}{\rm {\bf [No percolation for $d=1$]}}
\label{thm:noperc}
Let $d=1$. Then, for all $t\geq 0$, the set $\cO_t$ has almost surely
no unbounded cluster.
\end{theorem}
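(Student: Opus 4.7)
My approach is to reduce the $d=1$ case to a known fact about 1D Boolean percolation by exploiting the crucial observation that the trajectory of a real Brownian motion on $[0,t]$ is an interval. Writing $\underline{B}_t^x := \min_{0\le s\le t}(B_s^x-x)$ and $\overline{B}_t^x := \max_{0\le s\le t}(B_s^x-x)$, we can rewrite
\begin{equation*}
\cO_t \;=\; \bigcup_{x\in\cE}\,\bigl[\,x+\underline{B}_t^x,\; x+\overline{B}_t^x\,\bigr],
\end{equation*}
which is the classical one-dimensional Boolean model with centres $\cE$ and i.i.d.\ interval lengths equal in law to the range of Brownian motion up to time $t$. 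The key feature is that this range has Gaussian tails, so in particular its mean $\mu_t := \E[\overline{B}_t^0-\underline{B}_t^0]$ is finite; this is the regime where 1D Boolean percolation is expected to be subcritical for every intensity.

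The first step is to show that the origin has positive probability to be uncovered. I will apply the void-probability formula for the marked Poisson point process $(x,\underline{B}_t^x,\overline{B}_t^x)_{x\in\cE}$, using the equivalence $\{0\in x+[\underline{B}_t^x,\overline{B}_t^x]\}\Leftrightarrow\{x\in[-\overline{B}_t^x,-\underline{B}_t^x]\}$ and Fubini, to obtain
\begin{equation*}
\P_\lambda(0\notin \cO_t)\;=\;\exp\!\left(-\lambda \int_\R \P\bigl(x\in[-\overline{B}_t^0,-\underline{B}_t^0]\bigr)\,dx\right)\;=\;\exp(-\lambda\mu_t)\;>\;0.
\end{equation*}

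In the second step I will invoke the translation invariance and ergodicity of $\P_\lambda$ (provided by Section \ref{S1.3}) applied to the stationary random field $(\one_{y\notin \cO_t})_{y\in\R}$. By Birkhoff's ergodic theorem,
\begin{equation*}
\lim_{R\to\infty}\frac{1}{R}\,\Leb_1\bigl([0,R]\setminus \cO_t\bigr)\;=\;e^{-\lambda\mu_t}\;>\;0\qquad \P_\lambda\text{-a.s.},
\end{equation*}
and similarly on $(-\infty,0]$. Hence $\cpl{\cO_t}$ is a.s.\ unbounded both above and below, so any given $y\in\R$ lies between two uncovered points $a<y<b$; consequently the cluster of $\cO_t$ through $y$ (if nonempty) is trapped inside the bounded interval $[a,b]$. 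I do not foresee a genuine obstacle: the computation of the first step is routine, and the only nontrivial ingredient of the second step, namely the ergodicity of $\P_\lambda$, is a standard property of homogeneous marked Poisson point processes and is precisely what Section \ref{S1.3} delivers.
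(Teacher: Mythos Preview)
Your proof is correct. Both your argument and the paper's rest on the same underlying fact---a one-dimensional Boolean model whose grains have finite expected length never percolates---but the routes differ. The paper dominates $\cO_t$ by the symmetric Boolean model $\Sigma_t=\bigcup_{x\in\cE}\cB(x,\sup_{0\le s\le t}|B_s^x-x|)$, checks that the radius law has finite first moment via the reflection principle, and then simply cites Theorem~3.1 of \cite{MR96} for the non-percolation of $\Sigma_t$. You instead exploit that in $d=1$ the Brownian trace is already an interval, so no domination is needed, and you give a self-contained proof of the cited fact: an explicit void-probability computation yields $\P(0\notin\cO_t)=e^{-\lambda\mu_t}>0$, and ergodicity then forces the vacant set to be unbounded on both sides. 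Your argument is more elementary and transparent (and avoids the slight loss in passing from $\cO_t$ to $\Sigma_t$), while the paper's is shorter on the page and sets up the $\Sigma_t$ framework that it reuses in Section~\ref{S3.1} for $d\in\{2,3\}$. One small technical remark: Proposition~\ref{pr:ergodicity} gives ergodicity under $\Z$-shifts rather than $\R$-shifts, so when you invoke Birkhoff you should either restrict to integer $R$ or average the indicator $\one\{y\notin\cO_t\}$ over unit intervals first; this is harmless.
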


\begin{theorem}{\rm{\bf [Percolation phase transition and uniqueness for $d\in\{2,3\}$]}}
\label{thm:Aexist}
Suppose that $d\in\{2,3\}$. There exists $t_c = t_c(\lambda,d)>0$ such that for $t<t_c$, $\cO_t$ has almost surely no unbounded cluster, whereas for $t>t_c$, $\cO_t$
has almost surely a unique unbounded cluster.
\end{theorem}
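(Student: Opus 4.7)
The proof splits into three parts: (i) $t_c>0$, (ii) $t_c<\infty$, and (iii) uniqueness of the unbounded cluster for $t>t_c$. As a preliminary, I would invoke the ergodicity of the marked Poisson process (constructed in Section~\ref{S1.3}) under spatial translations to conclude that $\P_\lambda(\cO_t\text{ percolates})\in\{0,1\}$; together with the monotonicity of $t\mapsto\cO_t$ this lets one define
\[
t_c := \inf\{t\ge 0:\P_\lambda(\cO_t \text{ percolates})=1\}\in[0,\infty],
\]
and it remains to show that $t_c$ is strictly between $0$ and $\infty$.

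For $t_c>0$, the plan is to dominate $\cO_t$ by a Boolean model. Setting $R^x_t := \sup_{0\le s\le t}\|B^x_s-x\|$, each trajectory $B^x_{[0,t]}$ sits inside $\cB(x,R^x_t)$, and the reflection principle gives a Gaussian tail of the form $\bbP(R^x_t\ge r)\le 2d\exp(-r^2/(2dt))$. Hence $\cO_t$ is contained in a Boolean model with centres $\cE$ and i.i.d.\ radii $R^x_t$ having exponential tails. I would then apply the extended finite-box non-percolation criterion announced in the abstract and verify it for some small $t>0$: since $R^x_t\to 0$ in law as $t\to 0$, one can choose a reference box small enough that, with high probability, no two balls intersect, which is the type of local condition the criterion reduces to.

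For $t_c<\infty$, I would run a coarse-graining argument. Partition $\bbR^d$ into cubes $(Q_z)_{z\in\bbZ^d}$ of a fixed side $L$, and declare $Q_z$ \emph{good} if it contains at least one point $x\in\cE$ and if $B^x_{[0,t]}$ visits each of the $2d$ cubes adjacent to $Q_z$. For $L$ fixed and $t\to\infty$, both conditions have probability tending to one (the first condition is non-random in $t$ and has positive probability; the second follows from the a.s.\ unboundedness of Brownian trajectories). The resulting good-cube process is translation invariant and $k(L)$-dependent, so by the Liggett--Schonmann--Stacey domination theorem it dominates a supercritical Bernoulli site percolation on $\bbZ^d$ whenever $d\ge 2$, whose infinite cluster produces an unbounded connected subset of $\cO_t$.

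Uniqueness above $t_c$ is handled via a continuum Burton--Keane argument. Translation invariance and ergodicity force the a.s.\ number of unbounded clusters to lie in $\{0,1,\infty\}$. To rule out $\infty$, I would use a finite-energy type property: conditionally on $\cE$ and on the trajectories outside a large ball $\cB(0,R)$, what remains inside $\cB(0,R)$ is an independent Poisson process of intensity $\lambda$ with i.i.d.\ Brownian marks, and Brownian support theorems ensure that any prescribed configuration of connecting paths inside $\cB(0,R)$ is realised with positive conditional probability. This lets one merge, with positive density, any finite collection of unbounded clusters entering $\cB(0,R)$; the standard trifurcation counting then produces the usual contradiction between the $n^d$ growth of trifurcation cubes and the $n^{d-1}$ boundary of a box of side $n$. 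The main obstacle is the lower bound $t_c>0$, as it relies on the new finite-box criterion for Boolean models with random radii of exponential tail; the upper bound and uniqueness are comparatively standard once the correct ergodicity and finite-energy statements are in place.
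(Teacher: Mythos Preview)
Your coarse-graining for $t_c<\infty$ has a genuine gap: an infinite cluster of good cubes does not produce an unbounded connected subset of $\cO_t$. If $Q_z$ and $Q_{z'}$ are adjacent good cubes, you obtain a path $B^x_{[0,t]}$ (with $x\in Q_z$) that enters $Q_{z'}$ and a path $B^{x'}_{[0,t]}$ (with $x'\in Q_{z'}$) that enters $Q_z$, but nothing forces these two traces to meet; two finite-time Brownian paths in $d\in\{2,3\}$ intersect only with some probability, and your ``good'' condition does not ask for it. The paper repairs this by using \emph{edge} percolation on the coarse lattice: the edge $\{z,z'\}$ is declared open only when the two designated Brownian motions actually intersect before time $t$. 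The key input is that in $d\le 3$ two independent Brownian motions started at fixed points intersect almost surely eventually (M\"orters--Peres, Theorem~9.1(b)), so the intersection probability can be driven above any threshold by taking $t$ large, after which Liggett--Schonmann--Stacey applies as you intended.

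Your uniqueness sketch has the right endgame (Burton--Keane counting) but the local merging step is not justified in $d=3$. Support theorems say a Brownian path lies in any prescribed sup-norm tube with positive probability, but a tube in $\bbR^3$ has positive volume while the piece of cluster you must hit is a Brownian trace of Lebesgue measure zero: being close does not mean intersecting. Also, trajectories do not split into ``inside $\cB(0,R)$'' and ``outside $\cB(0,R)$'', so the conditional independence you claim is not available as stated. The paper handles both issues by (a) conditioning on the event that the unbounded clusters reach a large ball $\cB(0,R_2)$ \emph{without using} paths that start in a smaller ball $\cB(0,R_1)$, which restores independence from what happens inside $\cB(0,R_1)$; and (b) not inserting new points but \emph{rerouting} the first excursion through a thin annulus $\cA(R-\epsilon,R+\epsilon)$ of one selected path per cluster. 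Conditioned on its endpoints such an excursion is a Brownian bridge in the annulus; Lemmas~\ref{lem:density}--\ref{lem:intersect} compare it to a free Brownian motion and show that two such pieces intersect with uniformly positive probability. As a minor point, for $t_c>0$ the paper takes the simpler route via Gou\'er\'e's $d$-th moment criterion (Theorem~\ref{thm:Gouere}): since $\int_0^\infty r^d\rho_t(dr)\to 0$ as $t\to 0$, equation~\eqref{eq:lambdabound} immediately gives $\lambda<\lambda_c(\rho_t)$ for small $t$. The finite-box criterion you invoke is only needed in the paper for $d\ge 4$, through Proposition~\ref{lem:continuity}.
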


\noindent
Let $d\geq 4$, $r>0$ and let $\delta_r$ be the Dirac measure concentrated
on $r$. We denote by $\lambda_c(\delta_r)$ the critical value for
$\cO_{0,r}$ such that for all $\lambda<\lambda_c(\delta_r)$ the set $\cO_{0,r}$
almost surely does not contain an unbounded cluster, and such that for
$\lambda>\lambda_c(\delta_r)$ it does, see also (\ref{eq:lambdacrit}).
It follows from Theorem \ref{thm:Gouere}, that $\lambda_c(\delta_r)>0$ and
$\lim_{r\to 0}\lambda_c(\delta_r)=\infty$.
 
\begin{theorem}{\rm{\bf [Percolation phase transition and uniqueness for $d\geq4$]}}
\label{thm:Bexist}
Suppose that $d\geq4$ and let $r>0$ be such that $\lambda<\lambda_c(\delta_r)$. Then, there exists $t_c = t_c(\lambda,d,r)>0$ such that for $t<t_c$,
$\cO_{t,r}$ has almost surely no unbounded cluster, whereas for $t>t_c$, it has almost surely a unique  unbounded cluster.
\end{theorem}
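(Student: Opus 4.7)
\medskip

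\noindent\textbf{Plan.} By the obvious monotonicity $s\leq t\Rightarrow \cO_{s,r}\subseteq \cO_{t,r}$ together with the ergodicity of the Poisson--Brownian configuration under spatial translations (Section \ref{S1.3}) and a $0$-$1$ law, the probability of the existence of an unbounded cluster is a non-decreasing $\{0,1\}$-valued function of $t$. This lets me define
\begin{equation*}
t_c(\lambda,d,r) := \inf\bigl\{ t\geq 0 \,:\, \bbP_{\lambda}(\cO_{t,r}\text{ percolates}) = 1\bigr\},
\end{equation*}
and the task reduces to showing (i) $t_c>0$, (ii) $t_c<\infty$, and (iii) uniqueness of the unbounded cluster when one exists.

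\medskip

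\noindent\textbf{(i) Non-percolation for small $t$.} For every $x\in\cE$, the sausage $\bigcup_{0\leq s\leq t}\cB(B_s^x,r)$ is contained in the ball $\cB(x,\, r+R_t(x))$, where $R_t(x) := \sup_{0\leq s\leq t}\|B_s^x-x\|$. The reflection principle gives $\bbP(R_t>a)\leq C e^{-c a^2/t}$, so the law $\mu_{t,r}$ of $r+R_t(x)$ has exponential tails and dominates stochastically $\delta_r$ as $t\downarrow 0$. Consequently $\cO_{t,r}$ is dominated by a Boolean model $\widetilde\cO_{t,r}$ with i.i.d.\ radii of law $\mu_{t,r}$. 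Invoking Theorem~\ref{thm:Gouere} together with the continuity statement of Proposition~\ref{lem:continuity} for Boolean critical intensities --- for which the paper extends the classical finite-box criterion to exponentially-tailed radii (the remark after the abstract) --- one has $\lambda_c(\mu_{t,r})\to\lambda_c(\delta_r)$ as $t\downarrow 0$. Since by hypothesis $\lambda<\lambda_c(\delta_r)$, there exists $t_0>0$ with $\lambda<\lambda_c(\mu_{t,r})$ for all $t\leq t_0$, hence $\widetilde\cO_{t,r}$ does not percolate, and neither does $\cO_{t,r}$. Thus $t_c\geq t_0>0$.

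\medskip

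\noindent\textbf{(ii) Percolation for large $t$ via block renormalization.} For large $t$, a single sausage typically has diameter $\asymp\sqrt{t}$, so I perform a standard coarse-graining at scale $L$. Tile $\R^d$ by boxes $B_i$ of side $L$ and declare $B_i$ \emph{good} when, inside a fixed enlarged box $\widehat{B}_i$, one can find Poisson points whose sausages cross $B_i$ and connect with the sausages in each of the neighbouring boxes $B_j$ sharing a face with $B_i$. By the law of large numbers applied to $\cE$, and by the fact that a Brownian path of time-length $t\gg L^2$ will traverse any prescribed tube inside $\widehat{B}_i$ with positive probability, one can, for fixed $L$, choose $t$ so large that $\bbP_\lambda(B_i\text{ good})$ is arbitrarily close to $1$. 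The "good" events depend on disjoint portions of the Poisson process and Brownian motions (after enlarging $L$ by a constant factor and using a range-of-dependence bound), so by the Liggett--Schonmann--Stacey comparison theorem they dominate a supercritical Bernoulli site percolation on the box lattice. The resulting infinite cluster of good boxes translates into an unbounded cluster in $\cO_{t,r}$, yielding $t_c<\infty$.

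\medskip

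\noindent\textbf{(iii) Uniqueness.} I follow a Burton--Keane scheme adapted to the continuum Poisson--Brownian framework, in the spirit of Meester--Roy for the Boolean model. The underlying measure is translation-invariant, ergodic, and enjoys a finite-energy-type property (locally one may add or remove finitely many Poisson points together with their attached Brownian paths). Assuming two or more unbounded clusters with positive probability, a local modification argument produces a point of $\cE$ which, upon removing (resp.\ suitably relocating) its entire sausage, would split an unbounded component into three disjoint unbounded pieces --- a trifurcation. Translation invariance implies a positive density of trifurcations in any large box $[-N,N]^d$, while a counting argument à la Burton--Keane shows that the number of trifurcations inside such a box is at most a boundary term of order $N^{d-1}$, yielding a contradiction. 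Hence the number of unbounded clusters is $0$ or $1$.

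\medskip

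\noindent\textbf{Main obstacle.} The delicate step is (i): the sausage radius $r+R_t$ is unbounded, so classical Boolean percolation results of Meester--Roy, which typically assume bounded or compactly supported radii, cannot be applied off the shelf. The paper explicitly announces an extension of the finite-box non-percolation criterion to radius distributions with exponential tails, and this extension is precisely what makes the continuity of $\lambda_c$ at $\delta_r$ usable here. Step (ii) is morally clear but requires a careful choice of the parameters $L$ and $t$ and a correct handling of the finite-range dependence before invoking stochastic domination.
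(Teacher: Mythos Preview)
Your steps (i) and (iii) are essentially what the paper does. For (i), the paper dominates $\cO_{t,r}$ by the Boolean model with radii $r+\sup_{s\le t}\|B_s\|$ and then invokes Proposition~\ref{lem:continuity} exactly as you describe. For (iii), the paper runs a Burton--Keane argument via trifurcations and the Gandolfi--Keane--Newman counting lemma; the specific local surgery for $d\ge4$ is that a \emph{single} Wiener sausage can, with positive probability, cover an entire thin closed annulus $\overline{\cA}(R_2-3r/2,R_2-r/2)$ while staying in $\cB(0,R_2)$, thereby gluing all unbounded clusters that reach $\cB(0,R_2)$. Your sketch is compatible with this.

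Step (ii), however, has a genuine gap. Your block renormalization is precisely the paper's argument for $d\in\{2,3\}$ (Section~\ref{S4.1}), where the decisive input is that two independent Brownian paths started at bounded distance intersect with probability tending to $1$ as $t\to\infty$. For $d\ge 4$ this is false: the difference of two independent Brownian motions is transient, so the probability that two Wiener sausages of \emph{fixed} radius $r$, started at distance of order $L$, ever meet is bounded above by a quantity of order $(r/L)^{d-2}<1$, and increasing $t$ does not push it to $1$. With $L$ fixed you have only a bounded (random) number of Poisson points in $\widehat B_i$, and each pair of sausages fails to connect with uniformly positive probability; hence you cannot force $\P(B_i\text{ good})$ arbitrarily close to $1$ by taking $t$ large. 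Your appeal to ``a Brownian path will traverse any prescribed tube with positive probability'' yields only a positive, not near-one, bound, and the ``law of large numbers for $\cE$'' contributes nothing once $L$ is frozen.

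The paper avoids this obstruction by a completely different construction (Section~\ref{S4.2}): it projects onto the hyperplane $\cH_0=\{z=0\}$. Slicing $\bbR^{d-1}\times(0,\infty)$ into slabs $\cS_k=\{k-1<z\le k\}$, each Poisson point in $\cS_k$ has its $d$-th coordinate, a one-dimensional and hence recurrent Brownian motion, hit level $0$ before time $t$ with probability $p_t^k$. Thinning by this event and translating by the first $d-1$ coordinates of the Brownian motion at the hitting time transforms $\cE\cap\cS_k$ into an independent Poisson process on $\bbR^{d-1}$ of intensity $\lambda p_t^k$. Summing over $k$, the trace on $\cH_0$ stochastically dominates a $(d-1)$-dimensional Boolean model with deterministic radius $r$ and intensity $\lambda\sum_{k\ge1}p_t^k\ge \bbE^0[\sup_{0\le s\le t}B_s]-1\to\infty$ as $t\to\infty$. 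For $t$ large this Boolean model is supercritical, producing an unbounded cluster inside $\cO_{t,r}\cap\cH_0$ and hence in $\cO_{t,r}$. The key conceptual point is that one-dimensional recurrence substitutes for the pairwise-intersection estimate that is unavailable when $d\ge4$.
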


\subsection{Discussion}
\label{S1.4}

{\it Motivation and related models.} Our model fits into the class of continuum percolation models, which have been studied by both mathematicians and physicists. 
Their first appearance can be traced back (at least) to Gilbert \cite{G61} under the name of random plane networks. 
Gilbert was interested in modeling infinite communication networks of stations with range $R>0$. 
This was done by connecting any pair of points of a Poisson point process on $\R^2$ whenever their distance is less than $R$. 
Another application, which is mentioned in this work is the modeling of a contagious infection. 
Here, each individual gets infected when it has distance less than $R$ to an infected individual.

\par A subclass of continuum percolation models follows the following recipe: attach to each point of a point process (e.g. a Poisson point process) a random geometric object, e.g.\ a disk of random radius (Boolean model) or a segment of random length and random orientation (Poisson sticks model or needle percolation). 
Our model also falls into this class: we attach to each point of a Poisson point process a Brownian path (a path of a Wiener sausage when $d\geq 4$). It could actually be seen as a model of defects randomly distributed in a material that propagate at random, see Menshikov, Molchanov and Sidorenko~\cite{MMS88} for other physical motivations of continuum percolation. One can think for example
of an (infinite) piece of wood containing (homogeneously distributed) worms, where each worm tunnels through the piece of wood at random, and we wonder when the
latter ``breaks''.

\par The informal description above is reminiscent of (and actually, borrowed from) the problem of the disconnection of a cylinder by a
random
walk, which itself is linked to interlacement percolation \cite{S10}.
The latter is given by the random subset obtained when looking at the trace of a simple random walk on the torus $(\Z/N\Z)^{d}$ started from the uniform
distribution and running up to time $uN^d$, as $N\uparrow \infty$. Here $u$ plays the role of an intensity parameter for the interlacements set. However, even
though the model of random interlacements and our model seem to share some similarities, there is an important
difference: in the interlacement model, the number of trajectories which enter a ball of radius $R$ scales like $cR^{d-2}$ for some $c>0$,  whereas
in our case it is at least of order $R^d$. Nevertheless, we expect that a continuous version of random interlacement should arise as a scaling limit of our model as (i) time goes to infinity, (ii) intensity goes to $0$ and (iii) the product of both quantities stays constant.

\par For $d\geq 4$, our model actually appears in \u{C}ern\'{y}, Funken and Spodarev~\cite{CFS08} and describes the target detection area of a network of mobile sensors initially distributed at random and moving according to Brownian dynamics. However, in this work the focus is on numerical computations of coverage probabilities rather than on percolation. In a similar spirit Kesidis, Kostantopoulos and Phoha~\cite{KKP05} provide formulas for the detection time of a particle positioned at the origin (explicitly for $d=3$, bounds for $d=2$).  Percolation properties for a network of mobile sensors have also been studied by Peres, Sinclair, Sousi and Stauffer~\cite{PSSS13,PSS13}.
Nonetheless, instead of looking at $\cO_{t,r}$, which contains all paths up to time $t$ of the field of Brownian motions, they look at $\cup_{x\in\cE} \cB(B_t^x,r)$ at \emph{each fixed time $t$}.
This is an example of a dynamic Boolean model, as introduced by van den Berg, Meester and White~\cite{vdBMW97}.

\par Finally, another motivation to study such a model is that it should arise as the scaling limit of a certain class of discrete dependent percolation models; more precisely, percolation models for a system of independent finite-time random walks initially homogeneously distributed on $\bbZ^d$. This could also be seen as a system of non-interacting ideal polymer chains.\\

{\it \noindent Comments on the results.} First of all notice that we investigated a phase transition in $t$.
It would also be possible to play with the intensity $\lambda$
instead.
Indeed, multiplying the intensity $\lambda$ by a factor $\eta$ changes the typical
distance between two Poisson points by a factor $\eta^{-1/d}$. Thus, by scale invariance
of Brownian motion, the percolative behaviour of the model
is the same when we consider the Brownian paths up to time $\eta^{-2/d}t$ instead.
Hence, tuning $\lambda$ boils down to tuning $t$.

\par Moreover, it is worthwhile mentioning that Theorem \ref{thm:Aexist}
is stated only in the case $r=0$, which is the case of interest to us.
The result is the same when $r>0$, up to minor modifications.  
However, if $d\geq 4$ the paths of two independent $d$-dimensional 
Brownian motions starting at different points do not intersect.
Hence, in this case $r$ has to be chosen positive, otherwise
no percolation phase transition occurs.

\par We finish with a complementary result to Theorem \ref{thm:Bexist}: if $d\geq4$ and $r$ is such that $\gl > \gl_c(\gd_r)$, then $\cO_{0,r}$ already contains an unbounded component; therefore there is percolation at all times. In that case, van den Berg, Meester and White~\cite{vdBMW97} proved a stronger result: almost-surely, for all $t\geq0$, the set $\cup_{x\in\cE} \cB(B_t^x,r)$ contains an unbounded component.\\

{\it \noindent Open questions.} The results proven in this article answer the
first questions typically asked when studying a new percolation model.
However, there are still many challenges left open. We mention some of them:\\
{\bf \noindent (1)} How does the vacant set, that is the complement of $\cO_{t,r}$ in $\bbR^d$, look like? For instance, what is the tail behaviour of the distance from the origin to $\cO_{t,r}$?\\
{\bf \noindent (2)} What is the behaviour of $t_c(r)$ as $r\downarrow0$ for $d\geq4$?\\
{\bf \noindent (3)} How rigorous can one make the relation to random interlacement?\\ 
{\bf \noindent (4)} How rigorous can one make the relation to the system of independent finite-time random walks, which are initially homogeneously distributed on $\Z^d$?\\
{\bf \noindent (5)} If $d\geq 4$, what happens if the radii of the Wiener sausages decrease with time?\\
{\bf \noindent (6)} Is there percolation at criticality?\\
Question (6) is probably the most challenging. Question (2) is tackled in \cite{EP15}.\\

{\it \noindent Sketch of the proofs.}
$\bullet$  The main idea to prove non-percolation at small times is to dominate $\cO_{t,r}$ by a Boolean  percolation model with radius distribution given by the maximal displacement of a Brownian motion before time $t$. Standard results on the Boolean model yield non-percolation at small times.

It is important to mention that in the case $d\geq4$, additional work is required. Indeed, we need to discard the possibility that (i) $\lambda$ is supercritical for all $t>0$ and (ii) $\gl$ is subcritical at $t=0$, which means proving continuity of the critical intensity of the Boolean model w.r.t.\ the radius distribution at $\gd_r$.
This is obtained in Proposition~\ref{lem:continuity}, which requires a renormalization procedure (see Lemma \ref{lem:renormalization}) and
extends a finite box criterion for non-percolation in the Boolean model to radius distributions with an exponential tail.
To our knowledge such a criterion has only been proved for bounded radii. Moreover, we suspect that this could be extended to radius distributions with sufficiently thin polynomial tails.

$\bullet$ To establish the existence of a percolation phase, we distinguish between two cases:\\
{\bf \noindent (1)} For $d\in\{2,3\}$, we use a coarse-graining argument. 
More precisely, we divide $\R^d$ into boxes and we consider an edge percolation model of the coarse-grained graph whose vertices are identified with the centers of the boxes and the edges connect nearest neighbours. 
An edge connecting nearest neighbours, say $x$ and $x'$ in $\bbZ^d$, is said to be open if (i) both boxes associated to $x$ and $x'$ contain at least one point of the Poisson point process, say $y$ and $y'$, and (ii) the Brownian motions starting from $y$ and $y'$ intersect each other. 
A domination result by Liggett, Schonmann and Stacey \cite{LSS97} finally shows that percolation in that coarse-grained model occurs if one suitably chooses the size of the boxes and let time run for long enough. 
This implies percolation of our original model.\\
{\bf \noindent (2)} For $d\geq 4$, our strategy is to construct a $(d-1)$-dimensional supercritical Boolean model included in $\cO_{t,r}$.

$\bullet$ The difficulty in the uniqueness proof lies in extending the Burton-Keane argument to the continuous setting. For this purpose, we exploit ideas from Meester and Roy~\cite{MR94, MR96}. The case $d=3$ turns out to be the most delicate one and requires new ideas such as a careful cutting-and-glueing procedure on the Brownian paths.

\subsection{Outline of the paper}
\label{S1.5}
\noindent
We shortly describe the organization of the article.
In Section \ref{S2} we introduce the Boolean percolation model and prove some of its properties.
In Section \ref{S3} we prove Theorem \ref{thm:noperc}.
The proofs of Theorems \ref{thm:Aexist} and \ref{thm:Bexist} are 
given in Sections \ref{S4}--\ref{S6}.
Section \ref{S4} (resp. \ref{S5}) deals with the existence of a non-percolation (resp. percolation) phase.
In Section \ref{S6}
the uniqueness of the unbounded cluster is established.
The appendix provides a proof of a technical lemma which is needed in Section
\ref{S2}.

%%%%%%%%%%%%%%%%%%%%%%%%%%%%%%%%%%%%%%%%%%%%%%%%%%%%%%%%%%%%%%%%%%%%%%%%%%%%%%%%%%%%
%%%%%%%%%%%%%%%%%%%%%%%%%%%%%%%%%%%%%%%%%%%%%%%%%%%%%%%%%%%%%%%%%%%%

\section{Preliminaries on Boolean percolation}
\label{S2}
The model of Boolean percolation has been discussed in great detail in
Meester and Roy \cite{MR96} and we refer to this source for a 
discussion which goes beyond the description we are giving here.

\subsection{Introduction to the model}
\label{S2.1}
Let $\rho$ be a probability measure on $[0,\infty)$ and let $\chi$ be
a Poisson point process on $\R^d\times [0,\infty)$ with intensity
$(\lambda\times \Leb_d)\otimes\rho$. We denote the corresponding
probability measure by $\P_{\lambda, \rho}$.
A point $(x,r(x))\in\chi$ is interpreted to be the open ball in $\R^d$ with center $x$ and radius $r(x)$.
Furthermore, we let $\cE$ be the projection of $\chi$ onto $\R^d$. For $A\subseteq \bbR^d$, let
\begin{equation}
\Sigma(A) = \bigcup_{x\in\cE \cap A} \cB(x, r(x)).
\end{equation}
Boolean percolation deals with properties of the random set $\Sigma := \Sigma(\bbR^d)$.
We denote by $C(y)$, with $y\in\R^d$, the cluster of $\Sigma$
which contains $y$. If $y\notin\Sigma$, then $C(y)=\emptyset$.

\begin{theorem}[Gou\'er\'e, \cite{G08}, Theorem 2.1]
\label{thm:Gouere}
Let $d\geq 2$.
For all probability measures $\rho$ on $(0,\infty)$ the following assertions
are equivalent:\\
{\rm (a)} 
\begin{equation}
\label{eq:finitemoments}
\int_{0}^{\infty}x^d\,\rho(dx)<\infty.
\end{equation}\\
{\rm(b)} There exists $\lambda_0 \in (0,\infty)$ such that for all $\lambda <\lambda_0$,
\begin{equation}
\label{eq:noperc}
\P_{\lambda,\rho}\big(C(0)\mbox{ is unbounded}\big)=0.
\end{equation}
Moreover,
if {\rm(a)} holds, then, for some $c=c(d)>0$, (\ref{eq:noperc}) is satisfied for all
\begin{equation}
\label{eq:lambdabound}
\lambda < c\ \bigg(\int_{0}^{\infty}x^d\rho(dx)\bigg)^{-1}.
\end{equation}
\end{theorem}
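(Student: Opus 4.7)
The plan is to prove the two implications separately, the easy direction being (b)$\Rightarrow$(a) by contraposition and the bulk of the work being a multi-scale renormalization for (a)$\Rightarrow$(b).

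For (b)$\Rightarrow$(a): assume $\int_0^\infty x^d\,\rho(\dd x) = \infty$. Fix $\lambda > 0$ and $n \in \N$. The expected number of points $(x, r(x)) \in \chi$ satisfying $\cB(0, n) \subseteq \cB(x, r(x))$, i.e.\ $\|x\| \leq r(x) - n$, equals $\lambda c_d \int_n^\infty (r-n)^d\,\rho(\dd r) = +\infty$, where $c_d$ is the volume of the unit ball. By the mapping theorem for Poisson processes, infinitely many such points almost surely exist. In particular $0 \in \Sigma$ and $C(0)$ contains balls of arbitrarily large radius, so $C(0)$ is unbounded a.s. Hence (b) fails for every $\lambda > 0$.

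For (a)$\Rightarrow$(b) I would follow a Gou\'er\'e-type multi-scale argument. Fix $\alpha > 2$ and set $r_n = \alpha^n$. Define
\begin{equation*}
\pi_n = \P_{\lambda, \rho}\bigl(\Sigma \text{ contains a connected subset meeting both } \overline{\cB}(0, r_n) \text{ and } \partial \cB(0, r_{n+1})\bigr).
\end{equation*}
The key step is to establish a recursion of the form
\begin{equation*}
\pi_{n+1} \leq K \pi_n^2 + \lambda K \int_{r_n}^\infty x^d\,\rho(\dd x),
\end{equation*}
with $K = K(d, \alpha)$ deterministic. The quadratic term captures that any crossing of the annulus $\cA(r_{n+1}, r_{n+2})$ built from balls of radius at most $r_n$ must, by a pigeonhole / covering argument, contain two spatially disjoint sub-crossings at scale $n$; since these depend on the Poisson process restricted to disjoint regions, they are independent, yielding $\pi_n^2$ up to a combinatorial factor bounding the number of pairs of sub-annuli to consider. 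The additive term controls the complementary event that some ball of radius larger than $r_n$ is used, whose expected count near the annulus is of order $\lambda \int_{r_n}^\infty x^d\,\rho(\dd x)$.

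Once the recursion is in place, assumption (a) makes the tail integrals summable, and a standard bootstrap shows that $\pi_n \to 0$ whenever $\lambda$ is small enough to make $\pi_0$ small and to make the first tail term comparable to $\pi_0^2$. Since any unbounded cluster meeting $\overline{\cB}(0, r_0)$ must produce a crossing at every scale, this yields (\ref{eq:noperc}); tracking constants through the induction gives the explicit range (\ref{eq:lambdabound}), as the natural scale of $\lambda$ forcing $\pi_0$ small is $\lambda \lesssim \bigl(\int x^d \rho(\dd x)\bigr)^{-1}$. The main obstacle is making the geometric decomposition underlying the quadratic term rigorous: one must define the crossing events so that two \emph{disjoint} sub-crossings at scale $n$ can indeed be extracted from every crossing at scale $n+1$, and simultaneously truncate the Poisson process to radii $\leq r_n$ so that the two sub-events genuinely depend on disjoint Poisson data. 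Getting the geometric bookkeeping right while keeping $K$ independent of $n$ is the delicate part.
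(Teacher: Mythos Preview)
The paper does not prove this theorem: it is quoted verbatim from Gou\'er\'e~\cite{G08} and used as a black box (see Sections~\ref{S2.1}, \ref{S3.1}, \ref{S3.2}). So there is no ``paper's own proof'' to compare against.

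That said, your sketch is essentially Gou\'er\'e's original argument. The contrapositive direction (b)$\Rightarrow$(a) is correct as written: if $\int x^d\,\rho(\dd x)=\infty$ then for every $n$ the Poisson number of balls containing $\cB(0,n)$ has infinite mean, hence is a.s.\ infinite, and $C(0)=\R^d$ a.s. For (a)$\Rightarrow$(b), the recursion $\pi_{n+1}\le K\pi_n^2+\lambda K\int_{r_n}^\infty x^d\,\rho(\dd x)$ along geometric scales $r_n=\alpha^n$ is exactly the backbone of \cite{G08}; the decomposition into ``only small balls are used, extract two disjoint sub-crossings'' versus ``a large ball is present'' is the right one, and the bootstrap you describe does yield the explicit bound~\eqref{eq:lambdabound}. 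The point you flag as delicate---arranging that the two scale-$n$ sub-crossings depend on disjoint Poisson data after truncation to radii $\le r_n$---is handled in \cite{G08} by choosing $\alpha$ large enough (e.g.\ $\alpha\ge 10$) so that the enlarged neighbourhoods of the two sub-annuli are genuinely disjoint; this is routine once set up carefully. Your outline would work, but since the paper treats the result as imported, no proof is expected here.
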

\noindent
It is immediate from Theorem \ref{thm:Gouere}, that
\begin{equation}
\label{eq:lambdacrit}
\lambda_c(\rho):= \inf\big\{\lambda>0: 
\P_{\lambda,\rho}\big(C(0)\mbox{ is unbounded} \big)
>0\big\}>0.
\end{equation}
Moreover, from the remark on page 52 of \cite{MR96} it also follows
that $\lambda_c(\rho)<\infty$ if $\rho((0,\infty))>0$.
A more geometric fashion to characterize (\ref{eq:lambdacrit}) is via crossing 
probabilities.
For that fix $N_1, N_2, \ldots, N_d >0$ and for $A\subseteq \bbR^d$ let $\mathrm{CROSS}(N_1,N_2,\ldots, N_d ; A)$
be the event that the set $\Sigma(A) \cap [0,N_1]\times [0,N_2]\times\cdots \times[0,N_d]$
contains a component $\cC$ such that $\cC\cap \{0\}\times[0,N_2]\times\cdots \times[0,N_d]\neq \emptyset$ and $\cC\cap \{N_1\}\times[0,N_2]\times\cdots \times[0,N_d]\neq \emptyset$.
The critical value $\lambda_{\mathrm{CROSS}}$ with respect to this event is defined by
\begin{equation}
\label{eq:lambdaS}
\lambda_{\mathrm{CROSS}}(\rho)= \inf\left\{\lambda >0:\, 
\limsup_{N\to\infty}\P_{\lambda,\rho}\left(\mathrm{CROSS}(N,3N,\ldots,3N; \bbR^d)\right) >0\right\}.
\end{equation}
Under the assumption that $\rho$ has compact support, Menshikov, Molchanov and Sidorenko \cite{MMS88} proved that
\begin{equation}
\label{eq:equalitylambda}
\lambda_c(\rho) = \lambda_{\mathrm{CROSS}}(\rho).
\end{equation}

\subsection{Continuity of $\lambda_c(\rho)$}
\label{S2.2}
Given two probability measures $\nu$ and $\mu$ on $\R$ we write $\nu \preceq \mu$, if $\mu$ stochastically dominates $\nu$.
\begin{proposition}
\label{lem:continuity}
Let $\rho$ be a probability measure on $[0,\infty)$ with bounded support and
let $(\rho_n)_{n\in\N}$ be a sequence of probability measures on $[0,\infty)$
such that $\rho_n \to \rho$ weakly as $n\to\infty$ and $\rho \preceq \rho_n$ for each $n\in\N$.
Moreover, assume that 
\begin{itemize}
\item there are $c>0$ and $R_0>0$ such that for all $n\in\N$, 
$\rho_n([R,\infty))\leq e^{-cR}$ for all $R\geq R_0$;
\item there is a probability measure $\rho'$ on $[0,\infty)$
with a finite moment of order $d$ such that $\rho_n \preceq \rho'$ for all $n\in\N$.
\end{itemize}
Then, 
\begin{equation}
\label{eq:continuity}
\lim_{n\to\infty}\lambda_c(\rho_n) = \lambda_c(\rho).
\end{equation}
\end{proposition}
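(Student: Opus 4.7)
The plan is to prove the two one-sided inequalities $\limsup_{n\to\infty}\lambda_c(\rho_n)\leq \lambda_c(\rho)$ and $\liminf_{n\to\infty}\lambda_c(\rho_n)\geq \lambda_c(\rho)$ separately. The first is immediate from the hypothesis $\rho\preceq \rho_n$: by Strassen's theorem, I can couple all models on one probability space so that each ball produced under $\rho$ is contained in the corresponding ball produced under $\rho_n$; thus $\Sigma\subseteq \Sigma_n$ almost surely, and any unbounded cluster of $\Sigma$ remains in $\Sigma_n$, whence $\lambda_c(\rho_n)\leq \lambda_c(\rho)$ for every $n$.

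\textbf{Setting up the reverse inequality.} Fix $\lambda<\lambda_c(\rho)$; I need $\lambda<\lambda_c(\rho_n)$ for all large $n$. The key input is the finite-box criterion of Lemma~\ref{lem:renormalization}, which provides a threshold $p_0=p_0(N,\lambda,d)>0$ such that, under the common uniform exponential tail assumption on the $\rho_n$, a crossing probability below $p_0$ on a box of side length $N$ implies non-percolation. Since $\rho$ has compact support, the MMS identity \eqref{eq:equalitylambda} applies and $\lambda_{\mathrm{CROSS}}(\rho)=\lambda_c(\rho)>\lambda$, so one can pick $N$ so large that $\P_{\lambda,\rho}(\mathrm{CROSS}(N,3N,\ldots,3N))<p_0/2$. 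The remaining task is to transfer this crossing estimate from $\rho$ to $\rho_n$.

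\textbf{Truncation plus weak convergence.} Let $R_{\max}=\sup\mathrm{supp}(\rho)$ and pick a continuity point $M>R_{\max}$ of the distribution function of $\rho$ (possible since there are only countably many discontinuities). Split the Boolean configuration under $\rho_n$ into a ``small-radius'' part with radii in $[0,M]$ and a ``large-radius'' part with radii in $(M,\infty)$. The expected number of large balls whose closure intersects $[0,3N]^d$ is at most
\[
\lambda \int_M^\infty \Leb_d\!\left(\cB(0,s+3N\sqrt{d}\,)\right)\rho_n(\dd s)\leq C_{d,N}\int_M^\infty (s+3N)^d e^{-Cs}\dd s,
\]
by the uniform exponential tail, and this can be made $<p_0/4$ by choosing $M$ large enough, uniformly in $n$. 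For the small-radius part, the crossing event is determined by finitely many Poisson points lying in an $M$-enlargement of $[0,3N]^d$, together with their radii; since $M$ is a continuity point of $\rho$ and $\rho_n\Rightarrow \rho$, the joint law of this truncated configuration under $\rho_n$ converges weakly to that under $\rho$, and the corresponding truncated crossing probability converges to $\P_{\lambda,\rho}(\mathrm{CROSS}(N,3N,\ldots,3N))$. Combining the two parts, the full crossing probability under $\rho_n$ is below $p_0$ for $n$ large, and Lemma~\ref{lem:renormalization} delivers $\lambda<\lambda_c(\rho_n)$.

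\textbf{Main obstacle.} The principal difficulty is the simultaneous handling of the small- and large-radius contributions: weak convergence alone yields Portmanteau-type statements only at continuity sets of the limiting law, and since the crossing event is a complicated functional of both centres and radii, one must verify carefully that, after truncation at a well-chosen $M$, the event becomes a continuity set for the limiting Poisson configuration. The uniform exponential tail hypothesis is precisely what controls the large-radius part independently of $n$, while the uniform stochastic upper bound $\rho_n\preceq \rho'$ with $\int x^d \rho'(\dd x)<\infty$ plays a complementary role by guaranteeing, via Theorem~\ref{thm:Gouere}, that the sequence $(\lambda_c(\rho_n))_n$ stays bounded away from zero; this uniformity is what allows the threshold $p_0$ in Lemma~\ref{lem:renormalization} to be chosen once and for all.
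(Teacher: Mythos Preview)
Your argument is correct and follows the same overall strategy as the paper: the $\limsup$ inequality via stochastic domination, then for the $\liminf$ direction fix $\lambda<\lambda_c(\rho)$, use the MMS identity \eqref{eq:equalitylambda} to find $N$ with small crossing probability under $\rho$, transfer this to $\rho_n$ by a truncation-plus-convergence argument, and apply Lemma~\ref{lem:renormalization}.

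Two differences are worth flagging. First, the paper truncates by \emph{center location} (keeping only balls with centers in $\cB_\infty(0,M)$) rather than by radius; the contribution from distant centers is handled by Lemma~\ref{lem:outsideBM}, and this is where the hypothesis $\rho_n\preceq\rho'$ actually enters. Your truncation by radius, bounding the large-radius part via the uniform exponential tail, is equally valid and in fact bypasses the need for $\rho'$ altogether, so your final paragraph misidentifies its role: the threshold $\varepsilon$ in Lemma~\ref{lem:renormalization} depends only on the tail constant $C$ and the dimension, not on $N$, $\lambda$, or any moment bound. Second, rather than invoking Portmanteau and checking that the crossing event is a continuity set, the paper uses Skorokhod's representation to couple all radii on one space with $r_n(x)\to r(x)$ a.s., then observes that for a fixed finite set of centers in $\cB_\infty(0,M)$ the indicator of $\mathrm{CROSS}^M$ is a.s.\ continuous in the radii (tangencies have probability zero); dominated convergence then gives the limit directly. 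This sidesteps the continuity-set verification you correctly identify as the delicate step.
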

The proof of Proposition \ref{lem:continuity} relies on the following two lemmas whose
proofs are given in the appendix and at the end of this section, respectively.
\begin{lemma}
\label{lem:renormalization}
Let $N\in\N$, $\lambda>0$ and let $\rho$ be a probability measure on $[0,\infty)$ such that there are constants $c=c(\rho)>0$ and $R_0>0$ such that 
$\rho([R,\infty))\leq e^{-c R}$ for all $R\geq R_0$.
There is an $\varepsilon = \varepsilon(c,d) >0$ such that if
\begin{equation}
\label{eq:crossing}
\P_{\lambda,\rho}(\mathrm{CROSS}(N,3N,\ldots, 3N;\bbR^d))\leq \varepsilon,
\end{equation}
then $\P_{\lambda,\rho}(\exists\, y\in\R^d\, :\, \Leb_d(C(y))=\infty) = 0$.
\end{lemma}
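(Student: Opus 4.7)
The plan is to perform a Peierls-type renormalization argument in the spirit of the classical bounded-radius proofs (see e.g.\ \cite{MR96}), after splitting off the potentially problematic large balls via a truncation at scale proportional to $N$. Concretely, fix a small constant $\kappa = \kappa(d) \in (0, 1/10)$ and decompose the marked Poisson point process $\chi$ into two independent pieces $\chi^{\leq}$ and $\chi^{>}$ depending on whether the radius satisfies $r(x) \leq \kappa N$ or $r(x) > \kappa N$, with corresponding Boolean sets $\Sigma^{\leq}$ and $\Sigma^{>}$. The piece $\Sigma^{\leq}$ is a Boolean model with bounded radii, so its occupation inside any bounded region depends only on the Poisson points in a $\kappa N$-neighbourhood, whereas the large-ball piece $\Sigma^{>}$ has exponentially small intensity $\lambda \rho((\kappa N, \infty)) \leq \lambda e^{-C\kappa N}$ as soon as $\kappa N \geq R_0$.

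Next, tile $\R^d$ by cubes $K_z = zN + [0,N]^d$ and declare the cell $z$ bad if $B_z := A_z^{\leq} \cup D_z$ occurs, where $A_z^{\leq}$ is the event that $\Sigma^{\leq}$ realises a crossing of one of the $d$ boxes of shape $(N,3N,\ldots,3N)$ placed around $K_z$ in each coordinate direction, and $D_z$ is the event that some point of $\chi^{>}$ lies in an enlarged cube $zN + [-LN,LN]^d$ for a constant $L = L(d)$ chosen so that any ball of $\Sigma^{>}$ meeting a fixed neighbourhood of $K_z$ is forced to be centred in this enlargement. By monotonicity in $\rho$ (using $\rho \preceq$ the truncated law), translation invariance and the hypothesis, $\P(A_z^{\leq}) \leq d\, \varepsilon$; by the exponential tail, $\P(D_z) \leq (2L+1)^d \lambda N^d e^{-C\kappa N}$, which is below $\varepsilon$ once $N$ is sufficiently large in terms of $R_0, \lambda, C, d$. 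Moreover, since both $A_z^{\leq}$ and $D_z$ are measurable with respect to Poisson points in a bounded neighbourhood of $K_z$, the family $\{\mathbf{1}_{B_z}\}_{z\in\Z^d}$ is $k$-dependent with $k = k(L,d)$, so by the Liggett-Schonmann-Stacey theorem it is stochastically dominated by i.i.d.\ Bernoulli site percolation on $\Z^d$ with parameter tending to $0$ as $\varepsilon \to 0$. Choosing $\varepsilon = \varepsilon(C,d)$ small enough forces the bad cells to lie below the $\ast$-connectivity site-percolation threshold on $\Z^d$.

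The final step is the geometric claim that any cluster $\cC$ of $\Sigma$ with $\Leb_d(\cC) = \infty$ produces an infinite $\ast$-connected path of bad cells. Indeed, such a cluster visits infinitely many cubes $K_z$, and for each such $z$, either $\cC$ exits a fixed large neighbourhood of $K_z$ using only balls of radius $\leq \kappa N$ (which produces a crossing in $\Sigma^{\leq}$ and therefore triggers $A_z^{\leq}$), or $\cC$ uses a ball of radius $> \kappa N$ whose centre lies in the enlargement (triggering $D_z$). Combined with the previous paragraph, this yields a contradiction and proves the lemma.

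The main obstacle is precisely the independence breakdown caused by the unbounded radii: a single Poisson point with a very large radius could in principle link arbitrarily far-apart regions and ruin the finite-range dependence needed for the Peierls step. The exponential tail hypothesis is what makes the truncation succeed, since it controls both the intensity of dangerous points (so that $\P(D_z) \to 0$ as $N \to \infty$) and confines them to local neighbourhoods with overwhelming probability; this is why the argument does not extend verbatim to merely polynomial tails. A secondary subtlety is that $\varepsilon$ must depend only on $C$ and $d$: this is consistent because the Liggett-Schonmann-Stacey threshold depends only on $k(L,d)$ and on $d$, while the condition $\P(\mathrm{CROSS}) \leq \varepsilon$ can be met only for $N$ large enough to make the $\P(D_z)$ bound also tolerable.
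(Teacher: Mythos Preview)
The core gap is in your definition of $D_z$ and the ensuing geometric claim. You assert that $L = L(d)$ can be chosen so that ``any ball of $\Sigma^{>}$ meeting a fixed neighbourhood of $K_z$ is forced to be centred in this enlargement.'' No such $L$ exists: balls in $\Sigma^{>}$ have radii ranging over all of $(\kappa N, \infty)$, so a ball of radius $1000LN$ centred at distance $500LN$ from $K_z$ covers $K_z$ yet has its centre far outside $zN + [-LN, LN]^d$. An infinite cluster can therefore pass through $K_z$ entirely via one such distant large ball, leaving $z$ neither in $A_z^{\leq}$ (no small-ball crossing) nor in $D_z$ (no big-ball centre nearby). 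The cluster then produces infinitely many bad cells, but they may be spaced arbitrarily far apart and need not be $\ast$-connected, so the Peierls step collapses. If instead you redefine $D_z$ as ``some ball of $\Sigma^{>}$ \emph{intersects} the enlargement,'' the geometric claim is restored but $\{\one_{D_z}\}_z$ is no longer $k$-dependent for any finite $k$, and Liggett--Schonmann--Stacey does not apply. This tension between locality of the bad-cell indicator and $\ast$-connectedness of bad cells along the cluster is exactly what a single-scale truncation cannot resolve when radii are unbounded.

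The paper circumvents this with a multi-scale renormalization in the spirit of Sznitman: open sites are defined via the full $\Sigma$ (so an infinite cluster automatically yields an infinite $\ast$-path of open sites, with no dichotomy needed), and the long-range dependence is handled scale by scale. At level $n$, two well-separated crossing events $A_n(i)$ and $A_n(k)$ can fail to be independent only on the event $B_n(i,k)$ that a single ball reaches both of their determining regions; the exponential tail makes $\P_{\lambda,\rho}(B_n(i,k))$ of order $L_{n+1}^d e^{-c L_{n+1}N}$, which feeds a recursion of the form $\P_{\lambda,\rho}(A_{n+1}) \leq c_1 R^{2(d-1)(n+1)}\big[\P_{\lambda,\rho}(A_n)^2 + \P_{\lambda,\rho}(B_n)\big]$ and drives $\P_{\lambda,\rho}(A_n) \to 0$.

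A secondary point: even if the geometry were sound, your bound $\P(D_z) \leq (2L+1)^d \lambda N^d e^{-C\kappa N} \leq \varepsilon$ needs $N$ large in terms of $\lambda$ and $R_0$, whereas the lemma fixes $N$ in advance and asks that $\varepsilon$ depend only on $C$ and $d$.
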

\begin{lemma}
\label{lem:outsideBM}
Choose $\eta>0$ and $\rho'$ according to Proposition \ref{lem:continuity}, then
for all $N\in\N$
\begin{equation}
\label{eq:outsideBM}
\begin{aligned}
\lim_{M\to\infty}
\P_{\lambda,\rho'} 
&\bigg(\exists\ y\in \cpl{\cB_{\infty}(0,M)}\cap\cE \text{ s.t.\ } \cB(y,r(y))\cap[0,N]\times[0,3N]^{d-1}\neq \emptyset\bigg) =0.
\end{aligned}
\end{equation}
\end{lemma}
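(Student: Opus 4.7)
The plan is to bound the probability in (\ref{eq:outsideBM}) by a single first-moment estimate using the Campbell--Mecke formula for the marked Poisson point process $\chi$ governing the $\rho'$-Boolean model, and then to exploit the finiteness of the $d$-th moment of $\rho'$. The hypotheses on the sequence $(\rho_n)$ (weak convergence, uniform exponential tail) will play no role here; only the dominating measure $\rho'$ enters.

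First I would set $Q=[0,N]\times[0,3N]^{d-1}$ and record the elementary geometric observation that every $q\in Q$ satisfies $\|q\|_\infty\leq 3N$, so that for any $y\in\R^d$,
\[
\mathrm{dist}(y,Q)\ \geq\ \|y\|_\infty-3N,
\]
where $\mathrm{dist}$ is the Euclidean distance (which dominates the sup-norm distance). Consequently, as soon as $M>6N$, any $y\in\cE$ with $\|y\|_\infty>M$ whose Euclidean ball $\cB(y,r(y))$ intersects $Q$ must satisfy $r(y)\geq \|y\|_\infty/2$.

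Next, letting $A_M$ denote the event in (\ref{eq:outsideBM}) and applying Markov's inequality together with Campbell's formula for the Poisson point process $\chi$ on $\R^d\times[0,\infty)$ with intensity $(\lambda\,\Leb_d)\otimes\rho'$, I obtain
\[
\P_{\lambda,\rho'}(A_M)\ \leq\ \E_{\lambda,\rho'}\Big[\#\{y\in\cE:\|y\|_\infty>M,\ r(y)\geq\|y\|_\infty/2\}\Big]\ =\ \lambda\int_{\cpl{\cB_\infty(0,M)}}\rho'\big([\|y\|_\infty/2,\infty)\big)\,dy.
\]

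Finally, I would show that the integrand is integrable on all of $\R^d$ and conclude by dominated convergence. Indeed, by Fubini,
\[
\int_{\R^d}\rho'\big([\|y\|_\infty/2,\infty)\big)\,dy\ =\ \int_{[0,\infty)}\Leb_d\{y:\|y\|_\infty\leq 2r\}\,\rho'(dr)\ =\ 4^d\int_{[0,\infty)}r^d\,\rho'(dr),
\]
which is finite by the assumption that $\rho'$ has a finite $d$-th moment. Hence the right-hand side of the previous display tends to $0$ as $M\to\infty$, which is the claim. I do not anticipate any real obstacle: the argument is a routine first-moment computation, and the only mildly delicate points are the distance estimate and a correct use of Campbell's formula, both of which are standard.
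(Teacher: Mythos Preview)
Your proof is correct and follows essentially the same first-moment strategy as the paper: bound the probability by the expected number of Poisson points outside $\cB_\infty(0,M)$ whose ball reaches the box, and control that expectation via the $d$-th moment of $\rho'$. The only cosmetic difference is that the paper discretizes into annuli $\cA_\infty(K,K+1)$ and sums, whereas you invoke Campbell's formula directly in continuous form; your version is slightly more streamlined but not genuinely different.
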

%\begin{remark}
%We expect that our proof of Lemma \ref{lem:renormalization} still works when $\rho$ has a polynomial tail (of sufficiently large order) instead of an exponential tail. 
%However, since we do not need Lemma \ref{lem:renormalization} in this stronger version, we did not verify all the details needed for that.
%\end{remark}
We start with the proof of Proposition \ref{lem:continuity} subject to Lemmas
\ref{lem:renormalization}--\ref{lem:outsideBM}.
\begin{proof}[Proof of Proposition \ref{lem:continuity}]
The idea of the proof is due to Penrose \cite{P95}.
First, note that 
\begin{equation}
\label{eq:limsupcrit}
\limsup_{n\to \infty}\lambda_c(\rho_n) \leq \lambda_c(\rho),
\end{equation}
since $\rho\preceq\rho_n$ for all $n\in\N$.
Thus, we may focus on the reversed direction in (\ref{eq:limsupcrit}).
Second, fix $\lambda < \lambda_c(\rho)$ and let $\varepsilon >0$ be chosen
according to Lemma \ref{lem:renormalization}. By
(\ref{eq:equalitylambda}) there is $N\in\N$ such that
\begin{equation}
\label{eq:sigmabound}
\P_{\lambda,\rho}\left(\mathrm{CROSS}(N,3N,\ldots,3N; \bbR^d)\right) \leq \varepsilon/3.
\end{equation}

We consider the following coupling $(\hat{\Omega},\hat{\P})$ of $\{\P_{\lambda, \rho_n}\}_{n \in \N}$ and $\P_{\lambda, \rho}$:
\begin{itemize}
\item the points of $\cE$ are sampled according to $\P_\lambda$;
\item by Skorokhod's embedding theorem, for each $x\in \cE$, the radii $\{r_n(x)\}_{n \in \N}$ and $r(x)$
can be coupled in such a way that they have respective distributions $\{\rho_n\}_{n\in\N}$ 
and $\rho$, and $r_n(x) \xrightarrow[n \to
\infty]{} r(x)$ a.s.
\end{itemize}
The configurations obtained via this coupling are denoted by
\begin{equation}
\quad \Sigma_n := \bigcup_{x\in\cE}\cB(x,r_n(x)),\ n\in\N, \quad \mbox{ and }\quad  \Sigma_\infty := \bigcup_{x\in\cE}\cB(x,r(x)).
\end{equation}
Let $M>0$ and consider the events
$$E_n=\{\hat{\Sigma}:=(\Sigma_k)_{k \in \N \cup \{\infty\}} \ : \ \Sigma_n \in \mathrm{CROSS}^M\}, \quad n \in \N \cup \{\infty\},$$
where 
%E_{R_1, R_2} =  \left\{
%\begin{array}{c}
%\cB(0,R_2) {\rm \,\,intersects\,\,all\,\,} k \rm{\,\,unbounded\,\,clusters} \\
%{\rm without\,\, using\,\, paths\,\, starting\,\, in\,\,} \cB(0,R_1)
%\end{array}
%\right\}.
\begin{equation*}
\mathrm{CROSS}^M= \mathrm{CROSS}(N,3N,\ldots, 3N; \cB_\infty(0,M)).
\end{equation*}
Since the number of points in $\cB_{\infty}(0,M)\cap\cE$ is finite a.s., we may conclude that
\begin{equation}
\label{eq:convXrt}
\lim_{n\to \infty} \one_{E_n} = \one_{E_{\infty}} \qquad \text{a.s.}
\end{equation}
Note that the convergence in (\ref{eq:convXrt}) is not true for every possible realization, but indeed on a set of probability one.
%However, a little thought yields that the set on which the convergence in (\ref{eq:convXrt}) fails, is contained in a set of probability zero. 
Hence, by the dominated convergence theorem,
$$\lim\limits_{n \to \infty}\hat{\P}(E_n)=\hat{\P}(E_{\infty}).$$
Therefore,
$$\lim_{n \to \infty}\P_{\lambda,\rho_n}(\mathrm{CROSS}^M)= \P_{\lambda, \rho}(\mathrm{CROSS}^M),$$
so that for all $n\in\N$ large enough,
\begin{equation}
\label{eq:Xrtproba}
\P_{\lambda,\rho_n}(\mathrm{CROSS}^M)\leq 2\varepsilon/3.
\end{equation}
Whence, Lemma \ref{lem:outsideBM} and the fact that $\rho_n\preceq \rho'$ for all $n\in\N$, yields
that there is $n_0\in\N$ such that for all $n\geq n_0$,
\begin{equation}
\label{eq:sigmafinal}
\P_{\lambda,\rho_n}\big(\mathrm{CROSS}(N,3N,\ldots,3N;\bbR^d) \big)\leq \varepsilon.
\end{equation}
Thus, as a consequence of Lemma \ref{lem:renormalization}, there is no unbounded component under $\P_{\lambda, \rho_n}$ for all $n\geq n_0$.
Consequently, $\lambda < \lambda_c(\rho_{n})$ for all $n\geq n_0$,
from which Proposition \ref{lem:continuity} follows.
\end{proof}

The proof of Lemma \ref{lem:renormalization} is given in Appendix \ref{A1}.

\begin{proof}[Proof of Lemma \ref{lem:outsideBM}]
Fix $M>0$ and divide $\cpl{\cB_{\infty}(0,M)}$ into a disjoint family of annuli. Basic properties of Poisson point processes and a straightforward calculations yield the result. We omit the details. 
\end{proof}

\section{Proof of a non-percolation phase}
\label{Section3}

In this section we denote by $\rho_{t,r}$ the law of the random variable $\sup_{0\leq s\leq t} \|B_{s}^0\| + r$, and $\rho_t = \rho_{t,0}$. Let us also define
\begin{equation}
\label{eq:sigmat}
\Sigma_{t,r} = \bigcup_{x \in \cE} \cB\Bigg(x,4\sup_{0\leq s\leq t} \|B_{s}^{x}-x \| + r\Bigg)
\end{equation}
and observe that
\begin{equation}
\label{eq:inclusion}
\cO_{t,r} \subseteq \Sigma_{t,r}.
\end{equation}

\subsection{Proof of Theorem \ref{thm:noperc}}
\label{S3}
Let $t>0$. Note that $\Sigma_t$ has the same law as the occupied set in the Boolean percolation model with radius distribution
$\rho_{2t}$. Basic properties of Brownian motion show that $\rho_{2t}$ has a finite moment of order $d$.
Thus, by Theorem 3.1 in \cite{MR96}, almost-surely, the set $\Sigma_t$ does not contain an unbounded cluster.
Finally, the inclusion in \eqref{eq:inclusion} yields the result.

%%%%%%%%%%%%%%%%%%%%%%%%%%%%%%%%%%%%%%%%%%%%%%%%%%%%%%%%%%%%
%%%%%%%%%%%%%%%%%%%%%%%%JULIEN START%%%%%%%%%%%%%%%%%%%%%%%%%%%%%
%%%%%%%%%%%%%%%%%%%%%%%%%%%%%%%%%%%%%%%%%%%%%%%%%%%%%%%%%%%%

\subsection{Theorems \ref{thm:Aexist}-\ref{thm:Bexist}:  no percolation for small times}
\label{S4}
In this section we show that there is a $t_c=t_c(\lambda,d)>0$ ($t_c=t_c(\lambda, d,r)>0$ when $d\geq 4$) such that
$\cO_t$ ($\cO_{t,r}$ when $d\geq 4$) does not percolate when $t<t_c$.
The proof for $d\in\{2,3\}$ appears in Section \ref{S3.1}, whereas the proof for $d\geq 4$ appears in Section \ref{S3.2}.
Both proofs rely on the results of Section \ref{S2}.

\subsubsection{No percolation for $d\in\{2,3\}$}
\label{S3.1}
Recall (\ref{eq:lambdabound}) in Theorem \ref{thm:Gouere}. The inclusion in \eqref{eq:inclusion} and the fact that 
\begin{equation}
\label{eq:zero2ndmom}
\lim_{t\to 0} \int_{0}^{\infty}x^d\,\rho_{2t}(dx) =0
\end{equation}
are enough to conclude.

%Let $t>0$ and define $\Sigma_t$ and $\rho_t$ as in Section \ref{S3},
%but with the one-dimensional Brownian motions of Section \ref{S3}
%replaced by its $d$-dimensional counterparts.
%Note that,
%\begin{equation}
%\label{eq:zero2ndmom}
%\lim_{t\to 0} \int_{0}^{\infty}x^d\,\rho_{t}(dx) =0.
%\end{equation}
%Hence, an application of equation (\ref{eq:lambdabound}) in Theorem \ref{thm:Gouere} yields the claim.

\subsubsection{No percolation for $d\geq 4$}
\label{S3.2}
%Let $t>0$
%and let $\rho_{r,t}$ be the probability measure
%on $[r,\infty)$ defined by
%\begin{equation}
%\label{eq:rhoTR}
%\rho_{t,r}([a,b]) = 
%\bbP^{0}\Bigg(\sup_{0\leq s\leq t}\|B_s\|\in [a-r,b-r]\Bigg), \qquad r\leq a\leq b.
%\end{equation}
Note that $\rho_{2t,r} \to \delta_r$ weakly as $t \to 0$. Moreover, one readily checks that the assumption of Proposition \ref{lem:continuity} are met (with $\rho'= \rho_{1,r}$), therefore $\lambda_c(\rho_{2t,r}) \to \lambda_c(\delta_r)$ as $t\to 0$. Hence, there is a $t_0 >0$ such that
$\lambda < \lambda_c(\rho_{2t,r})$ holds for all $t< t_0$.
%Finally, observe that the set
%\begin{equation}
%\label{eq:sigmaTR}
%\Sigma_{t,r} = \bigcup_{x\in\cE} \cB\Bigg(x,\sup_{0\leq s\leq t}\|B_s^x-x\| + r\Bigg),
%\qquad \forall t\geq 0,
%\end{equation}
%is generated by the Poisson point process with intensity measure
%$(\lambda\times \Leb_d)\otimes \rho_{t,r}$ and contains
%$\pazocal{O}_{t,r}$, see (\ref{def:occB}). This is enough to conclude the claim.
Finally, we conclude with \eqref{eq:inclusion}.

%%%%%%%%%%%%%%%%%%%%%%%%%%%%%%%%%%%%%%%%%%%%%%%%%%%%%%%%%%%%%%%%%%%%%%%%%%%%
\section{Theorems \ref{thm:Aexist}--\ref{thm:Bexist}:  percolation for large times}

\label{S5}
In this section we establish that $\cO_t$ ($\cO_{t,r}$ when $d\geq 4$) percolates,
when $t$ is sufficiently large. The proof for $d\in\{2,3\}$ appears in Section \ref{S4.1},
whereas the proof for $d\geq 4$ appears in Section \ref{S4.2}.

\subsection{Proof of the percolation phase in $d\in\{2,3\}$}
\label{S4.1}
%We use a coarse-graining argument to prove existence of a percolation phase. More precisely, we divide $\R^d$ into boxes which are indexed by $\Z^d$ and we consider an edge
%percolation model on the coarse-grained graph whose vertices are identified with the centers of the boxes and the edges connect nearest neighbours. An edge connecting
%nearest neighbours, say $x$ and $x'$, in $\bbZ^d$, is said to be open if (i) both boxes associated to $x$ and $x'$ contain at least one point of the Poisson
%point process, and (ii) the Brownian motions starting at the points of $\cE$ closest to the center of each box intersect each other. Some technical computations and a domination result by Liggett,
%Schonmann and Stacey \cite{LSS97} finally show that percolation in that coarse-grained model occurs if one suitably chooses the size of the boxes and let time run
%long enough. This implies percolation of our original model.\\

The proof proceeeds according to the strategy described at the end of Section \ref{S1.4}, which relies on the introduction of a coarse-grained model. We now define this coarse-grained model more rigorously. Let $R>0$ and $t>0$ to be chosen later. Fix $x\in\bbZ^d$.
When $\mid \cE \cap \cB_{\infty}(2Rx,R) \mid \geq 1$, we define the point $z^{(R,x)}$, which is almost surely uniquely determined, via
\begin{equation}
\| z^{(R,x)} - 2Rx \|  = \inf_{z\in \cE \cap \cB_{\infty}(2Rx,R)}\| z - 2Rx \|.
\end{equation}
We denote by $B^{(R,x)}$ the Brownian motion starting at $z^{(R,x)}$. For all pairs of
nearest neighbours $(x,y)\in \bbZ^d \times \bbZ^d$, we say that the edge $\{x,y\}$,
which connects $x$ and $y$, is open if 
\begin{align}
\rm{(i)}\quad &\mid \cE \cap \cB_{\infty}(2Rx,R) \mid \geq 1,\\
\rm{(ii)}\quad &\mid \cE \cap \cB_{\infty}(2Ry,R) \mid \geq 1 \quad \text{and}\\
\rm{(iii)}\quad &\,B^{(R,x)}_{[0,t]} \cap B^{(R,y)}_{[0,t]} \neq \emptyset.
\end{align}  
We let $X_{\{x,y\}} = \one\{\text{the edge } \{x,y\} \text{ is open}\}$. We omit the dependence on $R$ and $t$ not to burden the notation.\\

\begin{lemma}\label{lem:perc_cgmodel}
Let $\epsilon>0$. There exists $R>0$ and $t>0$ such that for any couple of nearest neigbours $(x,y)\in \bbZ^d \times \bbZ^d$, $\P(X_{\{x,y\}}=1)\geq 1-\epsilon$.
\end{lemma}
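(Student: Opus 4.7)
The strategy is to decouple the three conditions defining $\{X_{(x,y)}=1\}$, handling the Poisson occupancy (i)--(ii) by first choosing the box scale $R$ and then the Brownian intersection (iii) by choosing the time $t$. By translation invariance of $\cE$ and of the Brownian family constructed in Section~\ref{S1.3}, the law of $X_{(x,y)}$ depends only on $y-x$, so it is enough to bound $\P(X_{(0,e_1)}=1)$ for a fixed unit nearest-neighbour $e_1$ of the origin in $\Z^d$.

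For (i) and (ii): the random variables $N^{(R)}(0)$ and $N^{(R)}(e_1)$ are independent Poisson variables of mean $\lambda(2R)^d$, so
\[
\P\bigl(N^{(R)}(0)\geq 1,\,N^{(R)}(e_1)\geq 1\bigr)\geq 1-2e^{-\lambda(2R)^d}.
\]
I would first fix $R=R(\epsilon,\lambda)$ large enough that this is at least $1-\epsilon/2$. On this event the two chosen Poisson points live in $\cB_\infty(0,R)$ and $\cB_\infty(2Re_1,R)$ respectively, so
\[
D:=\|z^{(R,0)}-z^{(R,e_1)}\|\leq D_0:=2R(1+\sqrt{d}).
\]

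For (iii): given $\cE$, the motions $B^{(R,0)}$ and $B^{(R,e_1)}$ are independent Brownian motions issued from $z^{(R,0)}$ and $z^{(R,e_1)}$. In $d\in\{2,3\}$ the Dvoretzky--Erd\H{o}s--Kakutani theorem guarantees that two independent Brownian paths with distinct starting points intersect almost surely on $[0,\infty)$; combined with monotone convergence this gives
\[
\bbP^{z_1,z_2}\bigl(B^1_{[0,t]}\cap B^2_{[0,t]}\neq\emptyset\bigr)\xrightarrow[t\to\infty]{}1.
\]
By Brownian scaling and rotation invariance this probability depends only on the ratio $t/\|z_1-z_2\|^2$ and is non-decreasing in $t$, so there exists $s_0=s_0(\epsilon)$ such that it exceeds $1-\epsilon/2$ whenever $t\geq s_0\|z_1-z_2\|^2$. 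Taking $t=s_0 D_0^2$ then makes (iii) hold with conditional probability $\geq 1-\epsilon/2$ uniformly for $\|z_1-z_2\|\leq D_0$, and conditioning on $\cE$ yields $\P(X_{(0,e_1)}=1)\geq(1-\epsilon/2)^2\geq 1-\epsilon$.

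\textbf{Main obstacle.} The only non-trivial ingredient is the Dvoretzky--Erd\H{o}s--Kakutani theorem on almost sure intersection of two independent Brownian paths in $d\leq 3$; this is the sole place where the dimensional restriction enters, and its failure in $d\geq 4$ is precisely why Theorem~\ref{thm:Bexist} has to work with positive-radius Wiener sausages. Promoting the a.s.\ infinite-time intersection to a uniform finite-time lower bound over initial configurations with bounded separation is then a routine scaling-and-monotonicity argument, and nothing in the Poisson step is delicate.
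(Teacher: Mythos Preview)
Your proof is correct and follows essentially the same strategy as the paper: first fix $R$ to control the Poisson occupancy, bound the Euclidean separation of the two selected points, then use monotonicity in the distance (via rotation and Brownian scaling) together with the Dvoretzky--Erd\H{o}s--Kakutani almost-sure intersection result to choose $t$. The paper isolates the monotonicity step as a separate lemma (Lemma~\ref{lem:perc_mono}) and cites \cite[Theorem~9.1(b)]{MP10} for the intersection, whereas you fold both into the proof directly; your conditioning on $\cE$ is in fact more carefully phrased than the paper's ``by independence of (i)--(iii)'' factorisation.
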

The proof of Lemma \ref{lem:perc_cgmodel} is deferred to the end of this section. We first show how one deduces the existence of a percolation phase from it.
\begin{proof}[Proof of the existence of a percolation phase]
Note that if $(x,x')$ and $(y,y')$ is a pair of nearest neighbour points in $\bbZ^d$ such that $\{x,x'\}\cap \{y,y'\} = \emptyset$, then $X_{\{x,x'\}}$
and $X_{\{y,y'\}}$ are independent. Therefore, the coarse-grained percolation model is a $2$-dependent percolation model. Thus, Theorem 0.0 of Liggett, Schonmann
and Stacey \cite{LSS97} yields that we may stochastically minorate the coarse-grained percolation model by a Bernoulli bond percolation model, whose parameter, say $p^*$, can be
chosen arbitrarily close to $1$, provided $\P(X_{\{x,y\}}=1)$ is sufficiently close to $1$. Let $ p_c(\bbZ^d)$ be the critical percolation parameter for Bernoulli bond percolation. Then, by Lemma \ref{lem:perc_cgmodel}, there are $R_0>0$ and
$t_0>0$ such that $p^* > p_c(\bbZ^d)$ for all $R\geq R_0$ and $t\geq t_0$. In that case, the coarse-grained model percolates, and so does $\cO_t$.
\end{proof}
\noindent
Consequently, it remains to prove Lemma \ref{lem:perc_cgmodel}.

\begin{proof}[Proof of Lemma \ref{lem:perc_cgmodel}]
By independence of the events in (i)--(iii), we have
\begin{equation}
\label{eq:ingr1}
\P(X_{\{x,y\}}=1) = \E \Big[  \one \Big\{
\begin{array}{c}
 \mid \cE \cap \cB_{\infty}(2Rx,R) \mid \geq 1  \\
\mid \cE \cap \cB_{\infty}(2Ry,R) \mid \geq 1
\end{array}
\Big\}
 \P\Big(B^{(R,x)}_{[0,t]} \cap B^{(R,y)}_{[0,t]}
\neq \emptyset \big|\ \cE\ \Big) \Big] .
\end{equation}
To proceed, we fix $R>0$ large enough such that 
\begin{equation}
\label{eq:ingr2}
\P( \mid \cE \cap \cB_{\infty}(2Rx,R) \mid \geq 1) = 1 - e^{-\lambda (2R)^d} \geq 1-\epsilon.
\end{equation}
Furthermore, $\P(B^{(R,x)}_{[0,t]} \cap B^{(R,y)}_{[0,t]} \neq\emptyset |\ \cE\ )$ decreases when $\|z^{(R,x)} - z^{(R,y)}\|$
increases and
$\|z^{(R,x)}-z^{(R,y)}\| \leq  R\sqrt{4(d-1)+16}$ when $\|x-y\|=1$. Thus, 
\begin{align}
\P\Big(B^{(R,x)}_{[0,t]} \cap B^{(R,y)}_{[0,t]} \neq \emptyset \Big|\ \cE\ \Big) &\geq \P\Big(B^{(R,x)}_{[0,t]} \cap B^{(R,y)}_{[0,t]} \neq \emptyset\Big| \|z^{(R,x)} - z^{(R,y)}\|
= R\sqrt{4(d-1)+16}\Big)\\
& = \bbP^{z_1,z_2}\Big(B^{(1)}_{[0,t]} \cap B^{(2)}_{[0,t]} \neq \emptyset\Big),
\end{align}
for any choice of $z_1$ and $z_2$ such that $\|z_1 - z_2\| = R\sqrt{4(d-1)+16}$. By Theorem 9.1 (b) in M\"orters and Peres \cite{MP10}, there exists $t$
large enough such that for all such choices of $z_1$ and $z_2$,
\begin{equation}
\label{eq:ingr3}
\bbP^{z_1,z_2}\Big(B^{(1)}_{[0,t]} \cap B^{(2)}_{[0,t]} \neq \emptyset\Big) \geq 1 - \epsilon.
\end{equation} 
The combination of \eqref{eq:ingr3}, \eqref{eq:ingr3} and \eqref{eq:ingr3} yields the result.
\end{proof}

\subsection{Proof of the percolation phase for $d\geq 4$}
\label{S4.2}
Throughout the proof, $z$ always denotes the $d$-th coordinate of $x=(\xi,z)\in
\R^d$. We further define
\begin{equation}
\pazocal{H}_0=\{(\xi,z)\in\R^d:\, z=0\}.
\end{equation}
\noindent
The main idea is to show percolation for a Boolean model on $\pazocal{H}_0$.
More precisely, we use that for each $x\in\pazocal{E}$, $B^{x}$ will eventually hit $\pazocal{H}_0$. From this we deduce that for $t$ large enough, the traces
of the Wiener sausages which hit $\pazocal{H}_0$ dominate a supercritical $(d-1)$-dimensional Boolean percolation model, and therefore percolate.\\

\noindent
We now formalize this strategy. In this proof, we write a $d$-dimensional Brownian motion $B$ as $(B^\text{I},B^\text{II})$ where $B^\text{I}$ and $B^\text{II}$ stand for a one and $(d-1)$-dimensional standard Brownian motion respectively.
For each $k\in\N$, let
\begin{equation}
\pazocal{S}_k:=\{(\xi,z) \in \R^d \ : \ k-1 < z \leq k \},
\end{equation}
so that $(\cS_k)_{k\in \bbZ}$ is a partition of $\bbR^{d-1}\times (0,\infty)$.
We fix $k\in \N$ and consider
\begin{equation}
\pazocal{E}_k=\{ \xi: \exists \ z\in \R \ \mbox{s.t.} \ (\xi,z) \in \pazocal{S}_k\cap \pazocal{E} \}.
\end{equation}
Note that $(\pazocal{E}_k)_{k\geq 0}$ are i.i.d. Poisson point processes with parameter $\lambda\times \mathrm{Leb}_{d-1}$. Given $\pazocal{E}_k$, we construct
a random set $\pazocal{P}^k_t$ in the following way:
\begin{itemize}
\item Thinning: each $\xi \in \pazocal{E}_k$ is kept if $\tau_0(z^\xi)\leq t$, where $z^\xi$ is such that $(\xi, z^{\xi})\in \cS_k \cap \cE$ (there is
almost-surely only one choice), and $\tau_0(z)$ is the first hitting time of the origin by a one-dimensional Brownian motion starting at
$z$. We choose all Brownian motions to be independent. 
Otherwise, $\xi$ is discarded. 

\item Translation: each $\xi \in \pazocal{E}_k$ that was not discarded after the previous step is translated by $B^\text{II}(\tau_0(z^\xi))$.
\end{itemize}
Note that $z^\xi$ is uniformly distributed in $(k-1,k)$. Moreover, $z^{\xi}$, $\tau_0(z^\xi)$ and $B^\text{II}$ are independent of $\xi$.
Thus, $\pazocal{P}^k_t$ is the result of a thinning and a translation of $\cE_k$ and both operations depend on random variables which are independent of
$\pazocal{E}_k$. Therefore, $(\pazocal{P}^k_t)_{k \geq0}$ is a collection of i.i.d. Poisson point processes with parameter $\lambda p^k_t\times \mathrm{Leb}_{d-1}$, where
\begin{equation}
p^k_t=\int_{k-1}^{k}\bbP^{0}\Big(\inf_{0\leq s\leq t}B^\text{I}_s\leq -z\Big)\, dz
\geq 
\bbP^0\Big(\sup_{0 \leq s \leq t} B^\text{I}_s\geq k\Big).
\end{equation}
By independence of the $\pazocal{P}_t^k$'s, the set $\pazocal{P}_t:=\bigcup_{k=1}^\infty
\pazocal{P}^k_t$ is a Poisson point process with parameter $\lambda\sum_{k\geq1} p^k_t\times \mathrm{Leb}_{d-1}$.

Let us now consider the Boolean model generated by $\pazocal{P}_t$ with deterministic radius $r$.
Observe that, 
\begin{equation}
\label{ls}
\sum_{k=1}^{\infty} p^k_t \geq \sum_{k=0}^\infty \bbP^0\Big(\sup_{0 \leq s \leq t} B^\text{I}_s\geq k\Big) - \bbP^0\Big(\sup_{0 \leq s \leq t} B^\text{I}_s\geq 0\Big) \geq 
\bbE^0\Big[\sup_{0 \leq s \leq t} B^\text{I}_s\Big]-1.
\end{equation}

Note that the right-hand side of \eqref{ls} tends to infinity as $t\to \infty$.
Thus, by the remark on page $52$ in \cite{MR96}, there exists $t_0>0$ large enough such that the Boolean model generated by $\pazocal{P}_t$ percolates for all $t\geq t_0$. 
Finally, note that $\pazocal{P}_t$ is stochastically dominated by $\cO_t \cap \cH_0$, in the sense that $\pazocal{P}_t$ has the same distribution as a subset of $\cO_t \cap \cH_0$. 
This completes the proof.

\section{Theorems \ref{thm:Aexist}--\ref{thm:Bexist}:  uniqueness of the unbounded cluster}
\label{S6}

We fix $t,r,\lambda\geq 0$ such that $t>t_c(\lambda,d,r)$.
In the following we denote by $N_\infty$ the number of unbounded clusters in $\cO_{t,r}$, which is almost-surely a constant as a
consequence of Remark~\ref{rem:ergodic}. For all $d\geq2$, the proof of uniqueness consists of (i) excluding the case $N_\infty = k$ with $k \in\bbN\setminus\{1\}$ and (ii) excluding the case $N_\infty= \infty$.
Section \ref{S5.2} contains the proof of uniqueness for
Wiener sausages ($r>0$) in $d\geq 4$, whereas Section~\ref{S5.3} contains the proof of uniqueness in $d\in\{2,3\}$.

\subsection{Uniqueness in $d\geq 4$}
\label{S5.2}

\subsubsection{Excluding $2\leq N_{\infty} < \infty$}
\label{S5.2.1}

In what follows we write for each $A\subseteq \mathbb{R}^d$,
\begin{equation}
\cO_{t,r}(A) = \bigcup_{x\in\cE\cap  A} \bigcup_{0\leq s \leq t} \cB(B_s^x, r),
\end{equation}
which is the union of Wiener sausages started at points of $\cE$ restricted to $A$.\\

We proceed by contradiction. Let us assume that $N_{\infty}$ is almost-surely equal to a constant $k\in\bbN\setminus\{1\}$.\\

\noindent
For $R_2 > R_1 > 0$, let us define $E_{R_1, R_2}$ as follows:
\begin{equation}
\label{al:ER1R2bis}
E_{R_1, R_2} =  \left\{
\mbox{all unbounded clusters of $\cO_{t,r}(\cpl{\cB(0,R_1)})$} 
\mbox{ intersect } \cB(0,R_2)\right\}.
\end{equation}
%where 
%\begin{equation}
%\label{eq:Coutside}
%\pazocal{C}(\cpl{\cB(0,R_1)}) = \bigcup_{x\in\cpl{\cB(0,R_1)}\cap \pazocal{E}}
%\bigcup_{0\leq s\leq t} \cB(B_s^x,r).
%\end{equation}
\noindent First, we note that there exist $R_1$ and $R_2$ such that
\begin{equation}
 \P(E_{R_1, R_2}) > 0.
\end{equation}
Indeed, fix $R_1>0$ and note that by monotonicity in $R_2$,
\begin{equation}
\label{ER1R2pos}
\P(E_{R_1, R_2}) \geq \P(E_{R_1, R_2} \cap \{ \cE \cap \cB(0,R_1) = \emptyset \}) \stackrel{R_2 \to \infty}{\longrightarrow} \P(\cE \cap \cB(0,R_1) = \emptyset)
>0.
\end{equation}
Therefore, we can find $R_2> 0$ such that $\P(E_{R_1, R_2})>0$.
\noindent
Next, we consider the event,
\begin{equation}
\label{ER1mod}
L_{R_1,R_2}=
\left\{
\begin{array}{c}
|\cB(0,R_1)\cap \cE|=1 \mbox{ and for }  x\in \cB(0,R_1)\cap \cE,\\ 
\cA(R_2-3r/2, R_2-r/2) \subset \bigcup_{0\leq s \leq t} \cB(B_s^x,r)\subset \cB(0,R_2)
\end{array}
\right\},
\end{equation}
which is independent of $E_{R_1,R_2}$ and has positive probability, see Remark \ref{rem:LR1R2} below. 
The independence is due to the fact that $E_{R_1,R_2}$ and $L_{R_1,R_2}$ depend on different points of $\cE$ and on different Brownian paths. 
Note that on $E_{R_1,R_2}\cap L_{R_1,R_2}$ all unbounded clusters of $\pazocal{O}_{t,r}(\cpl{\cB(0,R_1)})$ are connected inside $\cB(0,R_2)$. This is enough to conclude the proof.

\begin{remark} 
\label{rem:LR1R2}
A sketch of the proof that $L_{R_1,R_2}$ has positive probability goes as follows. Let $\epsilon\in (0,r/8)$. By boundedness, $\cA(R-3r/2,R_2-3r/2+\epsilon)$ can be covered by
a finite number of balls of radius $\epsilon$. Moreover, a Brownian motion starting in $\cB(0,R_1)$ has a positive probability of visiting all these balls before
time $t$ and before leaving $\cB(0,R_2-r)$. Consequently, on the aforementioned event,  $L_{R_1,R_2}$ is satisfied.
\end{remark}

\subsubsection{Excluding $N_{\infty} = \infty$}
\label{S5.2.2}

We assume that $N_{\infty}=\infty.$
We show that this assumption leads to a contradiction.
The proof is based on ideas in Meester and Roy~\cite[Theorem 2.1]{MR94}, where
a technique developed in Burton and Keane~\cite{BK89} is extended 
to a continuous percolation model. In the proof we use the following counting lemma, which
is due to Gandolfi, Keane and Newman~\cite{GKN92}. 
%It will yield a contradiction
%to the existence of trifurcation points, which will be constructed in the first step of the proof.

\begin{lemma}[Lemma 4.2 in \cite{GKN92}]
\label{lem:counting}
Let $\pazocal{S}$ be a set, $\pazocal{R}$ be a non-empty finite subset of $\pazocal{S}$ and $K>0$.
Suppose that\\
{\rm (a)} for all $z\in \pazocal{R}$, there is a family $(C_z^{1},C_z^{2},\ldots, C_{z}^{n_z})$, $n_z\geq 3$,
of disjoint non-empty subsets of $\pazocal{S}$, which do not contain $z$ and are such that
$|C_z^{i}|\geq K$, for all $z$ and for all $i\in\{1,2,\ldots,n_z\}$,\\
{\rm (b)} for all $z,z' \in \pazocal{R}$ one of the following cases occurs
(where  we abbreviate $C_z= \cup_{i=1}^{n_z}C_z^{i}$ for all $z\in \pazocal{R}$):\\
{\rm (i)} $(\{z\}\cup C_z)\cap (\{z'\}\cup C_{z'}) = \emptyset$;\\
{\rm (ii)} there are $i,j \in\{1,2,\ldots,n_z\}$ such that
$\{z'\}\cup  C_{z'}\setminus C_{z'}^{j} \subseteq C_z^{i}$ and
$\{z\}\cup  C_{z}\setminus C_{z}^{i} \subseteq C_{z'}^{j}$;\\
{\rm(iii)} there is $i\in\{1,2,\ldots,n_z\}$ such that $\{z'\}\cup C_{z'}\subseteq
C_z^{i}$;\\
{\rm(iv)} there is $j\in\{1,2,\ldots,n_{z'}\}$ such that $\{z\}\cup C_z\subseteq C_{z'}^{j}$.\\
\medskip\noindent
Then, $|\pazocal{S}| \geq K(|\pazocal{R}|+2)$.
\end{lemma}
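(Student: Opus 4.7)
The plan is to induct on $|R|$, using the fact that the structural conditions (b)(i)--(iv) force the trifurcations to be arranged in a tree-like manner in which each $z \in R$ corresponds to an internal vertex of degree $\geq 3$ and ``leaf arms'' correspond to pendant disjoint subsets of $S$ of size $\geq K$. The target inequality $|S| \geq K(|R|+2)$ then parallels the elementary fact that a tree with $n$ internal vertices, each of degree $\geq 3$, has at least $n+2$ leaves.

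For the base case $|R|=1$, the single $z$ has $n_z \geq 3$ pairwise disjoint arms, each of cardinality $\geq K$ and avoiding $z$, so $|S| \geq 1 + 3K \geq K(|R|+2)$. For the inductive step ($|R| \geq 2$), call an arm $C_z^i$ \emph{internal} if $C_z^i \cap R \neq \emptyset$ and a \emph{leaf arm} otherwise, and write $d(z)$ for the number of internal arms of $z$. The central sub-claim is the existence of a \emph{peripheral} $z^* \in R$ with $d(z^*) \leq 1$. I would establish this by finite descent: suppose $d(z) \geq 2$ for every $z \in R$, pick any $z_0 \in R$ and any internal arm $C_{z_0}^{i_0}$, and let $z_1 \in C_{z_0}^{i_0} \cap R$. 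Since $z_1 \in C_{z_0}$, the pair $(z_0,z_1)$ must fall into case (ii) or (iii), both of which force all but at most one arm of $z_1$ to lie inside $C_{z_0}^{i_0}$. Since $d(z_1) \geq 2$, at least one internal arm $C_{z_1}^{i_1}$ is strictly contained in $C_{z_0}^{i_0}$ (strictness uses $z_1 \in C_{z_0}^{i_0}$ but $z_1 \notin C_{z_1}$). Iterating yields an infinite strictly decreasing chain of internal arms, contradicting the finiteness of $S$.

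Given a peripheral $z^*$, at least $n_{z^*} - 1 \geq 2$ of its arms are leaf arms, so that $S^* := \{z^*\} \cup \bigcup_{i \text{ leaf arm}} C_{z^*}^i$ satisfies $|S^*| \geq 1 + 2K$ and is disjoint from $R \setminus \{z^*\}$. I would then apply the inductive hypothesis to the reduced system $(R',S')$ with $R' := R \setminus \{z^*\}$ and a suitable $S' \subseteq S \setminus S^*$ for which hypotheses (a), (b) persist, yielding $|S'| \geq K(|R|+1)$ and hence $|S| \geq |S^*| + |S'| \geq K(|R|+3) \geq K(|R|+2)$. Condition (b) for pairs in $R'$ is inherited directly from the original system, since neither the sets $C_z^i$ for $z \in R'$ nor their pairwise relationships involve $z^*$ intrinsically.

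The main obstacle is the bookkeeping required to verify hypothesis (a) on the reduced system. Naively setting $S' := S \setminus S^*$ and restricting $C_z^i$ to $S'$ may empty some arm: if a $z' \in R'$ has an arm $C_{z'}^{j^*}$ which happens to equal $\{z^*\}$ together with the leaf arms of $z^*$, then intersecting with $S'$ destroys it. Resolving this requires a delicate case analysis of $(z',z^*)$ for $z' \in R'$ using (ii)--(iv): at most one arm of $z'$ can be affected by the removal (the unique arm containing $z^*$), and when this arm would shrink below $K$ one must instead \emph{merge} $z'$ and $z^*$ into a single combined trifurcation of combined arm-count $\geq n_{z'} + n_{z^*} - 2 \geq 4$ before invoking the inductive hypothesis. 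Carrying through this case analysis is the technical heart of the proof.
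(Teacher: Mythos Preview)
The paper does not prove this lemma: it is quoted verbatim as Lemma~4.2 of Gandolfi--Keane--Newman~\cite{GKN92} and used as a black box. So there is no ``paper's proof'' to compare against; I will assess your argument on its own.

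Your high-level plan (induct on $|R|$, locate a peripheral $z^*$ by finite descent, peel it off) is the standard route and the descent argument is essentially sound. One small correction: the infinite strictly decreasing chain $C_{z_0}^{i_0}\supsetneq C_{z_1}^{i_1}\supsetneq\cdots$ contradicts the finiteness of $R$, not of $S$ (the $z_k$ are pairwise distinct because $z_j\notin C_{z_j}^{i_j}\supsetneq C_{z_{k-1}}^{i_{k-1}}\ni z_k$), and $S$ is not assumed finite. This is cosmetic.

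The real gap is the inductive step, which you yourself flag. Trying to carve out $S^*$ and re-verify hypothesis~(a) on $S\setminus S^*$ is the wrong bookkeeping, and the ``merging'' patch is vague (merging two elements of $R$ does not decrease $|R|$, so it is unclear what induction you would then invoke). A cleaner organisation avoids touching $S$ altogether. Call $C_z^i$ \emph{free} if $C_z^i\cap R=\emptyset$. First show directly from~(b) that free branches are pairwise disjoint even across different $z$'s (if two free branches met, the pair would be in case (ii), (iii) or (iv), each of which forces one branch to contain an element of $R$). Second, induct on $|R|$ to show the total number of free branches is at least $|R|+2$: removing your peripheral $z^*$ from $R$ deletes its $\geq 2$ free branches and creates at most one newly-free branch among the remaining $z'$ (if two branches $C_{z'}^{j}$, $C_{z''}^{k}$ both satisfied $C\cap R=\{z^*\}$, the pair $(z',z'')$ would violate all of (i)--(iv)). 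Hence the free-branch count drops by at least $1$ while $|R|$ drops by exactly $1$, and the base case gives $\geq 3$. Since the free branches are disjoint and each has size $\geq K$, the bound $|S|\geq K(|R|+2)$ follows. This replaces your unresolved ``technical heart'' with two short case checks.
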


\paragraph{STEP 1. Preparation for Lemma~\ref{lem:counting}.}

In the same manner as in Section \ref{S5.2.1}, one can show that there are $\delta >0$ and $R\in\N$ such that the event
\begin{equation}
\label{eq:EN0}
E_R(2Rz) := 
\left\{
\parbox{11cm}
{
there exists an unbounded cluster $C$ such that 
$C\cap \cpl{\cB_{\infty}(2Rz,R)}$ contains at least three unbounded clusters,
$|C \cap \cB_{\infty}(2Rz,R) \cap \cE|\geq 1$ and any cluster which intersects $\cB_{\infty}(2Rz,R)$ belongs to $C$
}
\right\}
\end{equation}
has probability at least $\delta$, for all $z\in\bbZ^d$.
%Note that $E_R(2Rz)$ implies that
%each $x\in\cB_{\infty}(0,R)$ which belongs to an infinite cluster also belongs to $C$.
\noindent
We call each unbounded cluster in $C\cap \cpl{\cB_{\infty}(2Rz,R)}$ a branch. 
To proceed, we fix $K>0$ and choose $M>0$ such that the event
\begin{equation}
\label{eq:ENM}
E_{R,M}(2Rz)= E_R(2Rz) \cap 
\left\{
\parbox{10cm}
{
there are at least three different branches of $\cB_{\infty}(2Rz,R)$ which contain at least $K$ points in $\cE\cap (\cB_{\infty}(2Rz,RM)\setminus \cB_{\infty}(2Rz,R))$
}
\right\},
\end{equation}
has probability at least $\delta/2$ for all $z\in\bbZ^d$, see Fig.~\ref{excinfinity} below. 
%For $z\in\Z^d$, the
%events $E_{R,M}(2Rz)$ and $E_R(2Rz)$ are defined in
%a similar manner as $E_{R,M}(0)$ and $E_R(0)$, except that the balls in the definitions are centered around $2Rz$.

\begin{figure}[htbp]
\begin{center}
\includegraphics[height=6cm]{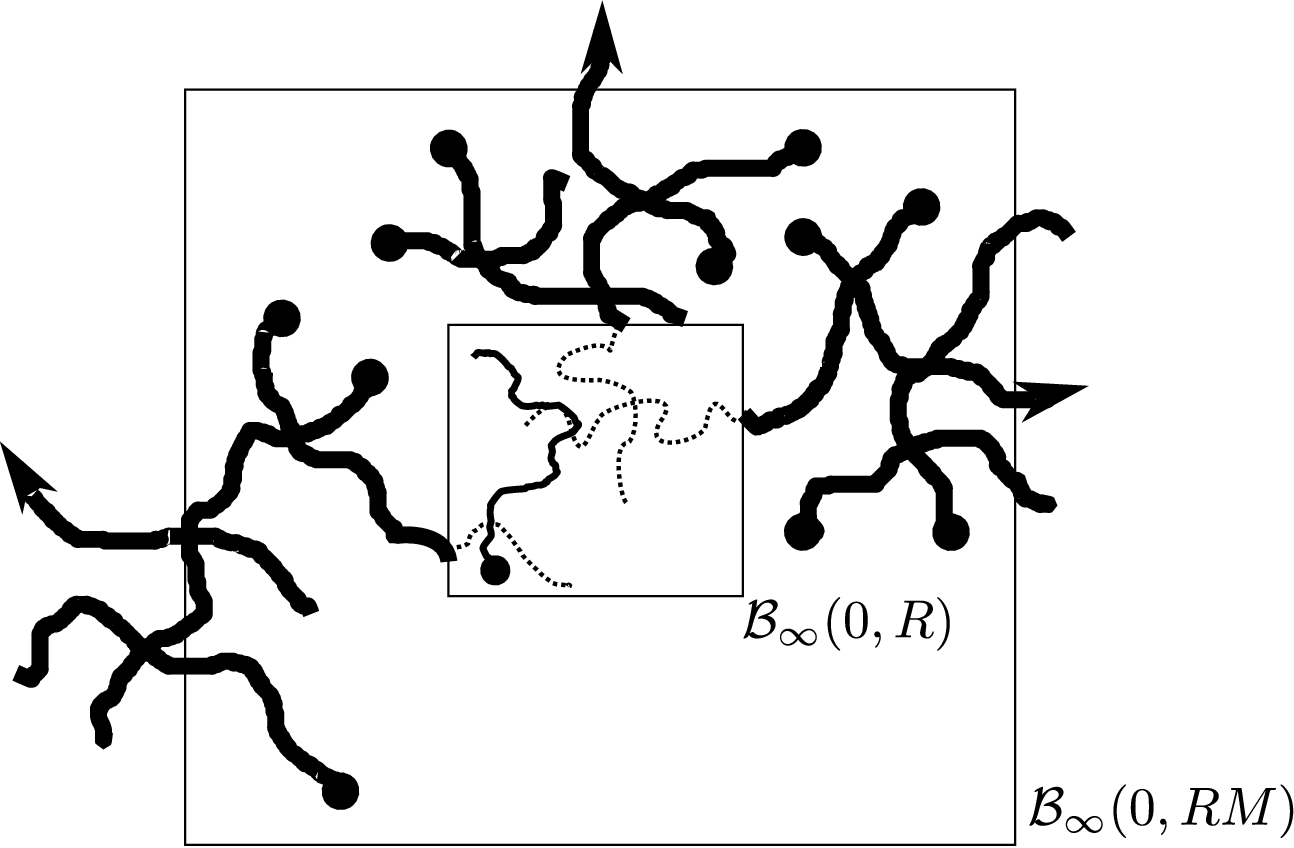} 
\end{center}
\caption{\small
The plot represents a configuration in $E_{R,M}(0)$ with $K=3$, see \eqref{eq:EN0}-\eqref{eq:ENM}.
The thick lines belong to the branches. 
The symbol $\blacktriangleright$ indicates a connection to infinity.
}
\label{excinfinity}
\end{figure}
\noindent
Let $L> M+2$ and define the set
\begin{equation}
\label{eq:R}
\cR =\{z\in\Z^d:\, 
\cB_\infty(2Rz,RM) \subseteq \cB_\infty(0,LR),\,
 E_{R,M}(2Rz) \mbox{ occurs}\}\footnotemark[1]\footnotetext[1]{The elements of $\pazocal{R}$ play the role of trifurcation points in the discrete percolation setting.}.
\end{equation}
Note that 
\begin{equation}
\label{eq:cardR}
|\{z\in\Z^d:\, 
\cB_\infty(2Rz,RM) \subseteq \cB_\infty(0,LR)\}| \geq (L-M-2)^d,
\end{equation}
so that we obtain by stationarity 
\begin{equation}
\label{eq:expcardR4}
\E(|\cR|) \geq \frac{(L-M-2)^d\delta}{2}.
\end{equation}

\paragraph{STEP 2. Application of Lemma \ref{lem:counting} and contradiction.} 
%For each $z\in \cR$ denote by $C_z^{(1)}$, $C_z^{(2)}$ and $C_z^{(3)}$, respectively, the set of all Poisson points of the three branches of $\cB_\infty(2Rz,R)$, which are contained in $\cB_\infty(0,LR)$. 

%We also write $C_z = C_z^{(1)} \cup C_z^{(2)} \cup C_z^{(3)}$.
%In case $\cB_\infty(2Rz,R)$ has more than three branches, then let $C_z^{(3)}$ be the set of all Poisson points which are contained in all but two branches of $\cB_\infty(2Rz,R)$, and in $\cB_\infty(0,LR)$.
\noindent
We identify each $z\in \cR$ with a Poisson point in $\cB_\infty(2Rz,R)\cap C$.
% which is
%contained in the corresponding infinite cluster.
%We will refer to these points as \emph{intersection points}. 
In what follows we write $\Lambda_z$ instead of $\cB_\infty(2Rz,R)$.
Let $n_z$ be the total number of branches of $\Lambda_z$ which contain at least $K$ Poisson points in $\cB_{\infty}(2Rz,R)$. For $i\in\{1,\ldots,n_z\}$, let $\mathbf{B}^i_z$ be the branch which is the $i$th-closest to
$2Rz$ among all branches of $\cB_{\infty}(2Rz,R)$, see (\ref{eq:ENM}).

A point $x$ is said to be connected to a set $A$ \emph{through} the set $\Lambda$ if there exists a continuous function $\gamma: [0,1]\mapsto \Lambda \cap
\cO_{t,r}$ such that $\gamma(0) = x$ and $\gamma(1) \in A$. We denote it by $x\stackrel{\Lambda}{\longleftrightarrow}A$. Finally, we define  
\begin{equation}
\label{eq:Cz}
C_z^{i} = \cE \cap \cB(0,LR) \cap \mathbf{B}^i_z =
\left\{
x \in \pazocal{E}\cap\cB_{\infty}(0,LR) \colon \ \con{x}{\Lambda_z^c}{\mathbf{B}^i_z}
\right\},
\qquad 
i\in \{1, \ldots, n_z\}.
\end{equation}

\noindent
Now we proceed to check that the conditions of Lemma \ref{lem:counting} are fulfilled. Here $\pazocal{S}=\cB_\infty(0,LR)\cap \cE$.
First note that by the definition of a branch, we have that for all $z\in \cR$:
\begin{itemize}
\item $|C_z^{i}|\geq K$,
\item $C_z^{i}\cap C_z^{j} = \emptyset$ for all $i,j \in\{1,\ldots,n_z\}$ with $i\neq j$ and
\item $z \notin C_z$.
\end{itemize}
Hence, Assumption {\rm (a)} of Lemma \ref{lem:counting} is met.\\

We now claim that the collection $\{C_z^{i}\}_{z\in \cR, i\in\{1,\dots,n_z\}}$ satisfies also Assumption {\rm (b)} of Lemma \ref{lem:counting}. At this point we
would like to emphasize two facts to be used later:
\begin{description}
\item[a.] 
\label{inside}
Due to \eqref{eq:EN0}, $\con{z}{\Lambda_z}{C^i_z}$ for all $i\in\{1,\dots,n_z\}$.
\item[b.] 
\label{ic}
If $\tilde{C}$ is an unbounded cluster such that $\tilde{C}\cap\Lambda_z\neq \emptyset$, then $\con{z}{\Lambda_z}{\tilde{C}}$.
\end{description}

Suppose that $(\{z\} \cup C_z ) \cap (\{z'\} \cup C_{z'} )\neq \emptyset$.
We consider three different cases:
\begin{enumerate}
\item If $z' \in C_z$ then there exists a unique $i\in \{1,\ldots,n_z\}$ such that $z'\in C^{i}_{z}$. We consider two subcases:
\begin{itemize}
\item 
If $z \in C_{z'}$, then there exists a unique $i'\in \{1,\ldots,n_{z'}\}$ such that $z\in C^{i'}_{z'}$ and we claim that $\{z'\}\cup C_{z'} \setminus C^{i'}_{z'}\subseteq C^i_z$ and $\{z\}\cup C_{z}
\setminus C^{i}_{z}\subseteq C^{i'}_{z'}$. Indeed, pick $x' \in C_{z'} \setminus C^{i'}_{z'}$. Then there exists a unique $j'\neq i'$ such that
$\con{x'}{\Lambda_{z'}^c}{C^{j'}_{z'}}$.
Note that $\con{x'}{\Lambda_{z'}^c\cap\Lambda_{z}^c}{C^{j'}_{z'}}$, since otherwise, due to {\bf b.},
$\con{z}{\Lambda^c_{z'}}{C^{j'}_{z'}}$ (by first connecting $z$ to $x'$ in $\Lambda^c_{z'}$ and then $x'$ to $C^{j'}_{z'}$ in $\Lambda^c_{z'}$ ), which contradicts the uniqueness of $i'$. 

Finally, we have that $\con{x'}{\Lambda_z^c}{C^{j'}_{z'}}, \ \con{z'}{\Lambda_{z'}\subset \Lambda_z^c}{C^{j'}_{z'}}, \ \con{z'}{\Lambda^c_z}{C^i_z}$. 
A concatenation of all these paths gives $\con{x'}{\Lambda_z^c}{C^i_z}$, that is $x'\in C^i_z$. This proves the first inclusion that we claimed. The second
inclusion follows by symmetry.
\item 
If $z \notin C_{z'}$, then we claim that $\{z'\}\cup C_{z'} \subseteq C^i_z$.

Indeed, take $x' \in C_{z'}$, then there exists a unique $j'$ such that $\con{x'}{\Lambda_{z'}^c}{C^{j'}_{z'}}$.
As before we have that $\con{x'}{\Lambda_{z'}^c\cap\Lambda_{z}^c}{C^{j'}_{z'}}$ (this time the contradiction follows from $z \notin C_{z'}$). 
The conclusion follows in the same way as in the previous case.
\end{itemize}
\item If $z\in C_{z'}$, then one may conclude as in {\rm (1)}.
\item 
Suppose that there exist $i,i'$ such that $C^{i}_z\cap C^{i'}_{z'}\neq \emptyset$. 
Take $x' \in C^{i}_z\cap C^{i'}_{z'}$. Then, $\con{x'}{\Lambda_z^c}{C^i_z}$ and $\con{x'}{\Lambda_{z'}^c}{C^{i'}_{z'}}$.
We distinguish between two cases:
\begin{itemize}
\item 
The path $\con{x'}{\Lambda_z^c}{C^i_z}$ intersects $\Lambda_{z'}$: 
due to {\bf b.} we have that $\con{z'}{\Lambda_{z}^c}{C^i_z}$.
Hence $z' \in C_z$, which reduces to Case (1). 
\item 
Otherwise, $\con{x'}{\Lambda_z^c\cap\Lambda_{z'}^c}{C^i_z}$: 
due to {\bf a.}, we have $\con{z}{\Lambda_z \subset \Lambda^c_{z'}}{C^i_z}$. 
Finally, a concatenation of the previous two paths with $\con{x'}{\Lambda^c_{z'}}{C^{i'}_{z'}}$ yields that $z\in C_{z'}$, which reduces to Case (2).
\end{itemize}
\end{enumerate}

\noindent Hence, by Lemma \ref{lem:counting}
\begin{equation}
\E\big(|\cB_\infty(0,LR) \cap \cE|\big)
\geq K(\E(|\cR|) +2),
\end{equation}
so that, by \eqref{eq:expcardR4},
\begin{equation}
\label{eq:lemcounting4}
\E\big(|\cB_\infty(0,LR) \cap \cE|\big)
\geq K((L-M-2)^d \delta/2 +2).
\end{equation}
On the other hand, since $\cE$ is a Poisson point process 
with intensity measure $\lambda\times \Leb_d$,
\begin{equation}
\label{eq:PPest4}
\E \big(|\cB_\infty(0,LR)\cap \cE|\big) = \lambda (2LR)^d.
\end{equation}
Thus, combining (\ref{eq:lemcounting4}) and (\ref{eq:PPest4}), yields
\begin{equation}
\label{eq:contradiction}
\forall L>M+2,\quad K((L-M-2)^d\delta/2 + 2) \leq \lambda (2LR)^d.
\end{equation}
Note that $M$ depends on $K$, so in order to get a contradiction one can choose $L=2M$ and let $K$ go to infinity in \eqref{eq:contradiction}.

%%%%%%%%%%%%%%%%%%%%%%%%%%%%%%%%%%%%%%%%%%%%%%%%%%%%%%%%%%%%%%%%%%%%%%%%%%%%%%%%%%%%%%%%%%%%%%%%%%%
%%%%%%%%%%%%%%%%%%%%%%%%%%%%%%%%%%%%%%%%%%%%%%%%%%%%%%%%%%%%%%%%%%%%%%%%%%%%%%%%%%%%%%%%%%%%%%%%%%%%%%
%%%%%%%%%%%%%%%%%%%%%%%%%%DIRKSVERSION%%%%%%%%%%%%%%%%%%%%%%%%%%%%%%%%%%%%%%%%%%%%%%%%%%%%%%%%%%%%%%%%%%%
%%%%%%%%%%%%%%%%%%%%%%%%%%%%%%%%%%%%%%%%%%%%%%%%%%%%%%%%%%%%%%%%%%%%%%%%%%%%%%%%%%%%%%%%%%%%%%%%%%%%%%
%%%%%%%%%%%%%%%%%%%%%%%%%%%%%%%%%%%%%%%%%%%%%%%%%%%%%%%%%%%%%%%%%%%%%%%%%%%%%%%%%%%%%%%%%%%%%%%%%%%%%%%
\subsection{Uniqueness in $d\in\{2,3\}$}
\label{S5.3}

\subsubsection{Excluding $\{2\leq N_\infty < \infty \}$}
\label{S5.3.1}

There is no straightforward way to adapt the proof of Section~\ref{S5.2} to the three-dimensional setting because of clear geometrical reasons: if an annulus is crossed by all the unbounded clusters then a three-dimensional Brownian motion travelling around it does not necessarily connect them. Let us briefly describe how we proceed in this case. Assume $2\leq N_\infty <\infty$. For $R$ large enough and $\epsilon$ small enough we show that, with positive probability, all the
unbounded clusters intersect $\cB(0,R)$ and contain a Brownian path crossing $\cA(R-\epsilon,R)$. Afterwards, we show that, still with positive probability, we can reroute the (say first) excursions inside $\cA(R-\epsilon,R)$ of
each of these Brownian paths such that they intersect each other and, as a consequence, merge all the unbounded clusters into a single one. This leads to the desired
contradiction, since
our construction provides a set of configurations of positive probability on which $N_\infty = 1$.\\ \par

\begin{remark}
\label{rem:twod}
It is possible to adapt the proof of Section~\ref{S5.2} to the two-dimensional setting. 
However, the forthcoming proof applies to the case of dimension two and three. 
So, we decided not to comment further on this adaptation and only present a unified argument for both cases.  
\end{remark}
We now assume $t>t_c$ and give the proof in full detail. 
To make it more accessible, we assume w.l.o.g.\ that $N_{\infty}=2$, see Remark~\ref{rem:otherK}.
Let $R>0$ and denote by $N_\infty^R$ the number of unbounded clusters in $\cO_t \setminus \cB(0,R)$, which we denote by $\{ C_i(R), 1\leq i \leq N_\infty^R\}$ (though it has little relevance, let us agree that clusters are indexed according to the order in which one finds them by radially exploring the occupied set from $0$). 
We also consider \emph{extended} clusters, defined by
\begin{equation}\label{eq:extcluster}
C_i^\ext(R) = \bigcup_{x\in\cE\,:\,B^x_{[0,t]} \cap C_i(R) \neq \emptyset} B^x_{[0,t]},
\end{equation}
i.e., $C_i^\ext(R)$ is the union of all Brownian paths up to time $t$ which have a non-empty intersection with $C_i(R)$.
\bigskip

\noindent
We define a notion of good extended cluster in five steps.
\paragraph{Definition a good extended cluster in five steps.}
Let $C^{\ext}=C^\ext(R)$ be an extended cluster.
We define the following events:

\smallskip
\noindent
{\bf STEP 1. Intersection with a large ball.} Set
\begin{equation}
E_R := \{C^\ext \cap \cB(0,R) \neq \emptyset\}.
\end{equation}
\smallskip
\noindent
{\bf STEP 2. Choice of a path in the extended cluster.} Consider
\begin{equation}
\cross = \{y\in \cE \cap C^\ext \,:\, \exists \ s\in[0,t],\, (\|y\| - R)(\|B_s^y\|- R)<0\},
\end{equation}
that is the set of points in $\cE \cap C^\ext$ whose associated Brownian motions cross $\partial \cB(0,R)$.
Note that $\cross\neq\emptyset$ on $E_R$.
Let $x$ be such that
\begin{equation}
\label{defx}
\|x \| =  \inf_{y\in \cross}  \|y\|.
\end{equation}
This way of picking $x$ is arbitrary.
Any other way would serve our purpose as well.

\smallskip
\noindent
{\bf STEP 3. First excursion through an annulus.} 
For a fixed $\epsilon >0$, consider the annulus $\cA_{R,\epsilon} := \cA(R-\epsilon, R)$. 
Define
\begin{equation}
I(x) := \one\{\inf\{s\geq 0: \| B_s^x \| = R\} <  \inf\{s\geq 0: \| B_s^x \| = R - \epsilon\}\}.
\end{equation}
We introduce the following entrance and exit times:
\begin{align}
\label{def:in.out1}
&\sigma^{\out} = \inf\{s \geq 0 : \| B_s^{x} \| = R - I(x)\epsilon\},\nonumber\\ 
&\sigma^{\ins} = \sup\{s \leq \sigma^{\out} : \| B_s^{x} \| = R + (I(x)-1)\epsilon\},
\end{align}
i.e., $B^{x}_{[\sigma^{\ins},\sigma^{\out}]}$ is the first excursion of $B^{x}$ through $\cA_{R,\epsilon}$, see Fig.~\ref{annulus} below.
The reason for this definition is that we do not want to exclude the possibility that $x$ is located inside $\cB(0,R)$. 
By choosing $\epsilon$ small enough we guarantee that the Brownian motion started at $x$ cross $\cA_{R,\epsilon}$, that is,
$\sigma^{\ins} \leq \sigma^{\out} \leq t$.
% Furthermore, we require that a rerouting of $B^x_{[\sigma^\ins,\sigma^\out]}$ does not disconnect $B_{[0,t]}^{x}$ from $C$. 
Further, we consider the event on which $B_{[0,\sigma^{\ins})}^{x}$ or $B_{(\sigma^{\out},t]}^{x}$ is already connected to $C^\ext$, i.e., we introduce                                                                                                                   
\begin{equation}
\label{econn}
E^{\conn}_{\epsilon} := \left\{ \left(B^{x}_{[0, \sigma^{\ins})} \cup B^{x}_{(\sigma^{\out},t]}\right) \cap C^\ext \neq \emptyset \right\}.
\end{equation}
Summing up, we set 
\begin{equation}
E_{R,\epsilon} = E_R \cap\{\sigma^\ins \leq \sigma^\out \leq t\} \cap E^{\conn}_{\epsilon}.
\end{equation}
\smallskip
\noindent
{\bf STEP 4. Restriction on the time spent to cross the annulus.} 
%As has been explained above, our goal is to restrict ourselves to some specific excursions of 
%$B^{x_1}_{[\sigma_1^{\ins}, \sigma_1^{\out}]}$ and $B^{x_2}_{[\sigma_2^{\ins}, \sigma_2^{\out}]}$.
%The probability of those turn out to be easier to control when we have a deterministic lower bound  on the random time lengths $\sigma_i^{\out}-\sigma_i^{\ins}$. 
For $T\in(0,t)$ set
\begin{equation}
E_{R,\varepsilon,T}= E_{R,\epsilon} \cap \{\sigma^{\out} - \sigma^{\ins} \geq T\}.
\end{equation}
\smallskip
\noindent
{\bf STEP 5. Staying away from the boundary of the annulus during the excursion.}
%To obtain a configuration with a unique unbounded cluster, we restrict ourselves to configurations in the set $E_{R,\epsilon,T}$ and we reroute $B^{x_1}_{[\sigma_1^{\ins},\sigma_1^{\out}]}$ and $B^{x_2}_{[\sigma_2^{\ins},\sigma_2^{\out}]}$ such that they intersect each other.
Since  $\sigma^{\ins}$ is not a stopping time, the law of $B^{x}_{[\sigma^{\ins},\sigma^{\out}]}$ is not that of a Brownian motion. This is why we will work instead with $B^{x}_{[\sigma^{\ins}+\delta,\sigma^{\out}-\delta]}$ for a fixed $\delta\in (0,T/8)$  (the restriction to time $\sigma^{\out}-\delta$ is only for esthetic reasons).
This subpath, when conditioned on both endpoints, is a Brownian bridge conditioned to stay in $\cA_{R,\epsilon}$ and whose density with respect to a Brownian motion is explicit and tractable. 
%To be more precise, the last property holds only on time intervals which excludes neighbourhoods of the endpoints, so we need to work with $B^{x}_{[\sigma^{\ins}+2\delta,\sigma^{\out}-2\delta]}$ instead. 
%To get a uniform lower bound on the intersection probability (see \eqref{eq:Pcap}), 
For a fixed $\bep \in (0,\epsilon/2)$ set
\begin{align}
%\tilde{E}_{R,\epsilon,T,\bep} &:= E_{R,\gep,T} \cap \left\{ B^{x}_{\sigma^{\ins} +\delta}, B^{x}_{\sigma^{\ins}+2\delta} B^{x}_{\sigma^{\out} -2\delta}, B^{x}_{\sigma^{\out} -\delta} \in \cAJ \right\},\quad \mbox{and}\\
E_{R,\epsilon,T,\bep} &:= E_{R,\gep,T} \cap \left\{ B^{x}_{\sigma^{\ins} +\delta}, B^{x}_{\sigma^{\out} -\delta} \in
\cAJ\right\},
\end{align}
where $\cAJ:=\cA(R-\epsilon+\bep, R-\bep)\subset \pazocal{A}_{R,\epsilon}$.

\bigskip
\noindent
Having disposed of the notion of good extended cluster, let
\begin{equation*}
\tilde{E}_{R,\epsilon,T,\bep,n}=\Big\{N_\infty^R=n,\ C^\ext_i \in E_{R,\epsilon,T,\bep},\ 1\leq i \leq n \Big\}.
\end{equation*}
By monotonicity arguments and the initial assumption that $N_\infty=2$, there exist positive constants $R,T,c,\bar{\epsilon}<\epsilon/2$ and $n_0\geq 2$ such that 
\begin{equation}
\label{positive}
\P(\tilde{E}_{R,\epsilon,T,\bep,n_0})>c>0.
\end{equation}
For simplicity we consider $n_0=2$, see Remark \ref{rem:otherK}.
For $i\in\{1,2\}$, we denote by $x_i, \sigma^\ins_i$ and $\sigma^\out_i$ the objects defined in \eqref{defx} and \eqref{def:in.out1} when $C^\ext=C^\ext_i$.

\bigskip

\noindent
The rest of the proof consists in merging $C_1^\ext$ and $C_2^\ext$ into a single unbounded cluster by resampling $B^{x_1}_{[\sigma_1^{\ins}, \sigma_1^{\out}]}$ and $B^{x_2}_{[\sigma_2^{\ins}, \sigma_2^{\out}]}$ with excursions that do intersect each other.\\
Thus, we require that a rerouting of the excursions does not disconnect them from their respective cluster, hence Step 3.
This task is easier when both excursions have time length deterministically bounded from below, hence Step 4.
Conditioned on both endpoints, $B^{x}_{[\sigma^{\ins},\sigma^{\out}]}$ is  a Brownian excursion, the law of which is not absolutely continuous with respect to that of Brownian motion.
As a consequence, we cannot directly use our knowledge on the intersection probabilities of two Brownian motions, hence Step 5.

%Intersection probabilities of Brownian motions are more tractable than that of Brownian bridges, hence Step 5. 

\begin{figure}[htbp]
\begin{center}
\includegraphics[height=6cm]{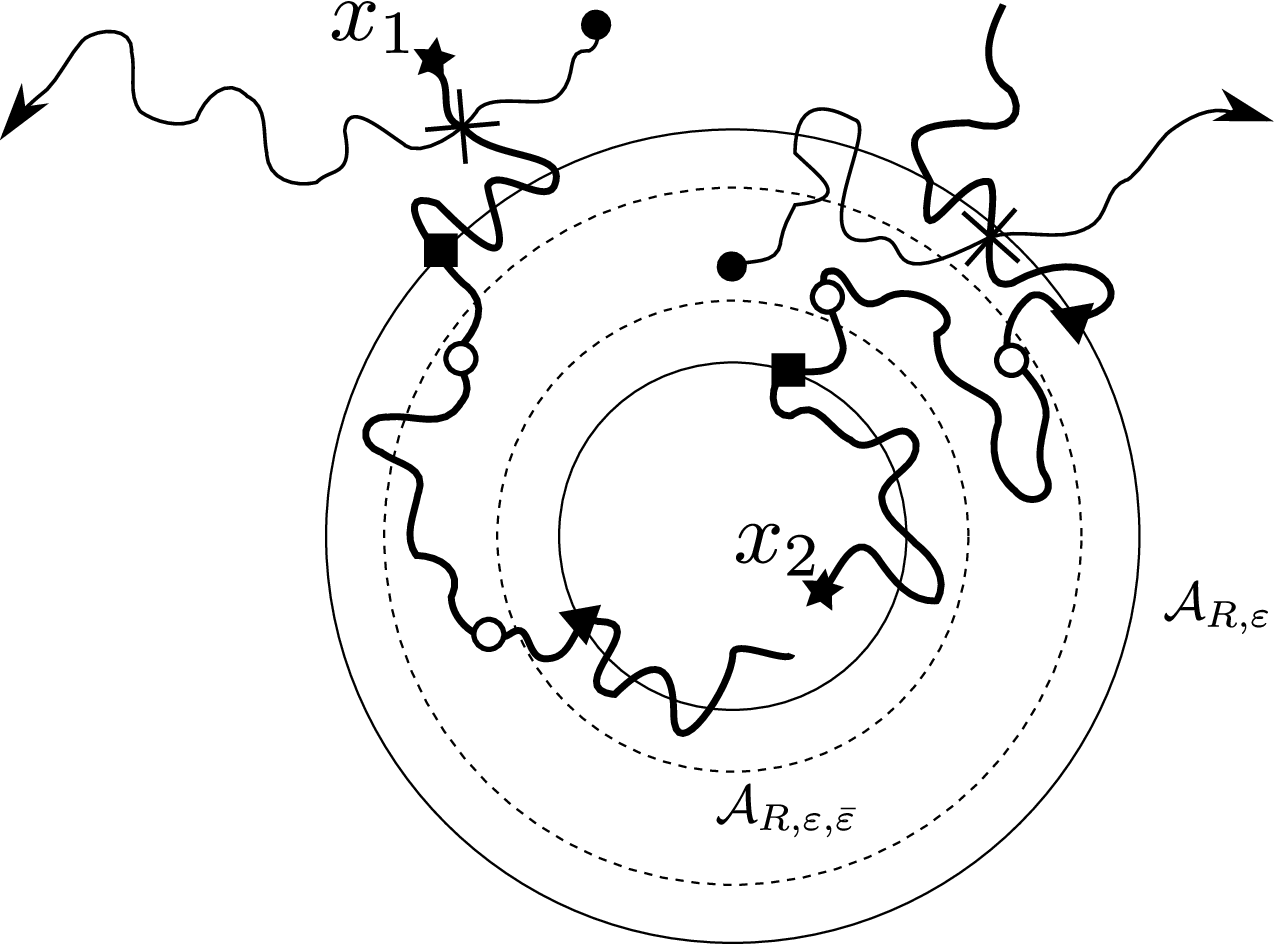} 
\end{center}
\caption{\small
In this picture the points marked with $\bigstar$ are $x_i, \ i=1,2$.
The symbols $\blacksquare,\blacktriangle$ refer to the times $\sigma^{\ins}$ and $\sigma^{\out}$, respectively.
The symbol $\circ$ represents the times $\sigma^{\ins}+\delta$ and $\sigma^{\out}-\delta$, respectively.
Finally, the symbol $\times$ indicates that Condition \eqref{econn} is fulfilled.
}
\label{annulus}
\end{figure}

\bigskip

\paragraph{Connecting $C_1$ and $C_2$ inside the annulus.}
%Julian's surgery
The strategy announced above translates into the following lower bound for $\P(N_{\infty}=1)$:
\begin{equation}
\label{contra}
\begin{aligned}
&\P(\tilde{E}_{R,\epsilon,T,\bep,2}) \times\\
&\P\Big(
\Big\{B^{x_1}_{[\sigma_1^{\ins}+\delta,\sigma_1^{\out}-\delta]} \bigcap B^{x_2}_{[\sigma_2^{\ins}+\delta,\sigma_2^{\out}-\delta]}\neq \emptyset\Big\},
\bigcap\limits_{i=1,2}\Big\{B^{x_i}_{\sigma_i^{\ins}+2\delta},B^{x_i}_{\sigma_i^{\out}-2\delta}
\in \cAJ\Big\}
\ \Big| \ 
\tilde{E}_{R,\epsilon,T,\bep,2}
\Big).
\end{aligned}
\end{equation}
The reason for the $2\delta$ in \eqref{contra} is that the property mentioned in Step 5 only holds on time intervals which excludes neighbourhoods of the endpoints.

%Step 1--Step 5 translates into the following lower bound:
%\begin{equation}
%\label{contrad}
%\begin{aligned}
%&\P(N_{\infty}=1) \geq 
%\P(\tilde{E}_{R,\epsilon,T,\bep} \bigcap \{B^{x_1}_{[\sigma_1^{\ins}+\delta,\sigma_1^{\out}-\delta]} \cap B^{x_2}_{[\sigma_2^{\ins}+\delta,\sigma_2^{\out}-\delta]} \neq \emptyset\}),
%\end{aligned}
%\end{equation}
%which equals
%\begin{equation}
%\begin{aligned}
%\P(&E_{R,\epsilon,T,\bep})\\
%&\times
%\P\Big(
%\Big\{B^{x_1}_{[\sigma_1^{\ins}+\delta,\sigma_1^{\out}-\delta]} \cap B^{x_2}_{[\sigma_2^{\ins}+\delta,\sigma_2^{\out}-\delta]}\neq \emptyset\Big\}
%\bigcap_{i=1,2}\Big\{B^{x_i}_{\sigma_i^{\ins}+2\delta},B^{x_i}_{\sigma_i^{\out}-2\delta}
%\in\pazocal{A}_{R,\bep}\Big\}
%\ | \ 
%E_{R,\epsilon,T,\bep}
%\Big).
%\end{aligned}
%\end{equation}
\noindent

\bigskip

\noindent {\bf Additional notation.}
At this point we would like to introduce some notations for ease of readability.\\
\noindent
First, let us introduce some events of interest. Let $s>r\geq 0$.
For a set $D\subset \R^d$, we denote by
\begin{equation}
\label{stay}
\pazocal{S}_{[r,s]}(D):=\{\Pi\in \mathcal{C}([0,\infty),\R^d):\, \Pi_{[r,s]} \subseteq D \},
\end{equation}
the set of all continuous paths which \emph{are contained in }$D$ during the time interval $[r,s]$, and by
\begin{equation}
\label{belong}
\pazocal{L}_{r,s}(D):=\{\Pi\in \mathcal{C}([0,\infty),\R^d):\,\Pi_r, \Pi_s \in D \},
\end{equation}
the set of all continuous paths which \emph{lie in the set} $D$ at times $r,s$.\\
\noindent
In the same fashion we also define for $s_1>r_1\geq 0$ and $s_2>r_2\geq 0$
\begin{equation}
\label{intersection}
\pazocal{I}_{[s_1,r_1],[s_2,r_2]}:=\Big\{\Pi^{(1)},\Pi^{(2)} \in \mathcal{C}([0,\infty),\R^d):\,\Pi_{[s_1,r_1]}^{(1)}\bigcap \Pi_{[s_2,r_2]}^{(2)}\neq \emptyset \Big\},
\end{equation}
the set of all pairs of continuous paths which, when restricted to the respective time intervals $[r_1,s_1]$ and $[r_2,s_2]$, have a non-empty intersection. \\
\noindent
Secondly, we slightly modify our previous notation: $\bbP^a_t$ now denotes the law of a Brownian motion starting at $a$ and running from time $0$ up to time $t$.
If we consider Brownian bridges instead of Brownian motions we substitute the letter $a$ by ${\bf a}=(\underline{a};\overline{a})$ containing the starting and ending positions of the Brownian bridge.
When considering two independent copies of a Brownian motion (resp.\ Brownian bridge) we add a superscript/subscript, i.e.\ $\bbP^{a_1,a_2}_{t_1,t_2}$ (resp.\ $\bbP^{\bf a_1,a_2}_{t_1,t_2}$).
Finally, we will refer to a Brownian bridge as $W$.

\bigskip

\noindent
{\bf Observation: }
For $i\in\{1,2\}$, conditionally on $T_i := \sigma_i^{\out} - \sigma_i^{\ins}$ and the endpoints $(B^{x_i}_{\sigma_i^{\ins}+\delta}, B^{x_i}_{\sigma_i^{\out}  -\delta}) = (a_i,b_i)$, $B^{x_i}_{[\sigma_i^{\ins}+\delta, \sigma_i^{\out}-\delta]}$ is a Brownian bridge running from $a_i$ to $b_i$ in a time interval of length $\tau_i := T_i -2\delta \geq \frac{3T}{4}$, conditioned to stay in $\cA_{R,\epsilon}$ 
(recall the definitions of $\sigma_i^{\ins}$ and $\sigma_i^{\out}$, $i\in\{1,2\}$).

\vspace{0.5cm}\noindent
The observation above together with \eqref{contra} yields
\begin{equation}
\label{eq:onecluster}
\begin{aligned}
\P&(N_\infty=1)\\
& \geq \P(\tilde{E}_{R,\epsilon,T,\bep,2})\, \inf_{\substack{\tau_1,\tau_2\geq 3T/4\\ {\bf a_1}, {\bf a_2} \in \cAJ^2}}
\bbP^{{\bf a_1},{\bf a_2}}_{\tau_1,\tau_2}
\Big(
\pazocal{L}^i_{\delta, \tau_i-\delta}(\cAJ) \ , \ \pazocal{S}^i_{[0,\tau_i]}(\pazocal{A}_{R,\varepsilon}
)
,\  1\leq i \leq 2, \quad \pazocal{I}_{[0,\tau_1],[0,\tau_2]}
\Big)
\end{aligned}
\end{equation}
and the superscript $i\in\{1,2\}$ refers to the $i$-th copy of the corresponding processes.
Since $\P(\tilde{E}_{R,\epsilon,T,\bep,2})>0$, by Steps 1--5, it is enough to prove that
\begin{equation}\label{eq:Pcap}
\inf_{\substack{\tau_1,\tau_2\geq 3T/4\\ {\bf a_1}, {\bf a_2} \in \cAJ^2}}
\bbP^{{\bf a_1},{\bf a_2}}_{\tau_1,\tau_2}
\Big(
\pazocal{L}^i_{\delta, \tau_i-\delta}(\cAJ) \ , \ \pazocal{S}^i_{[0,\tau_i]}(\pazocal{A}_{R,\varepsilon}
)
, \  1\leq i \leq 2, \quad \pazocal{I}_{[0,\tau_1],[0,\tau_2]}
\Big) > 0.
\end{equation}
%Let $\cD$ be a subset of $\R^d$. We call a point $y\in\partial\cD$ regular,
%if $P_y(B_{[0,\varepsilon]} \in\cD)=0$ for any $\varepsilon>0$, see also \cite{KS91} chapter 4.2.
%The two important lemmas for this step are:
%\begin{lemma}
%\label{lem:abs.cont.bb}
%Let $\cD$ be a closed subset of $\R^d$ having a non-empty interior such that every point of its boundary is regular. Suppose $W_{[0,t]}$ is a Brownian bridge conditioned to stay in $\cD$ with starting-point $x\in \mathring{\cD}$ . Then, for all $s\in(0,t)$, the law of $W_{[0,s]}$ is absolutely continuous with respect to the law of a Brownian motion $B_{[0,s]}$ starting in $x$ which is conditioned to stay in $\cD$.
%\end{lemma}

\paragraph{Proof of Equation \eqref{eq:Pcap}.} 
We fix ${\bf a_1, a_2}\in\cAJ$ and $\tau_1,\tau_2\geq 3T/4$. The left-hand side of (\ref{eq:Pcap}) may be bounded from 
below by
\begin{equation}
\label{eq:Pcapbelow}
\bbP^{{\bf a_1},{\bf a_2}}_{\tau_1,\tau_2}
\Big(
\pazocal{L}^i_{\delta, \tau_i-\delta}(\cAJ) \ , \ \pazocal{S}^i_{[0,\tau_i]}(\pazocal{A}_{R,\varepsilon})
, \  1\leq i \leq 2, \quad \pazocal{I}_{[0,\tau_1-\delta],[0,\tau_2-\delta]}
\Big),
\end{equation}
which equals, by the Markov property applied at times $\tau_i-\delta$, $i\in\{1,2\}$,
\begin{equation}\label{eq:PcapMarkov}
\bbE^{{\bf a_1},{\bf a_2}}_{\tau_1,\tau_2}
\bigg(
\prod_{i=1,2} \one
\big\{
\pazocal{L}^i_{\delta, \tau_i-\delta}(\cAJ) \ , \ \pazocal{S}^i_{[0,\tau_i-\delta]}(\pazocal{A}_{R,\varepsilon})
\big\}
\ 
\one\big\{\pazocal{I}_{[0,\tau_1-\delta],[0,\tau_2-\delta]}
\big\}
\Phi_\delta(W^{(i)}_{\tau_i-\delta};\overline{a}_i)
\bigg)
\end{equation}
where
\begin{equation}
\Phi_\delta({\bf a}) := \bbP^{\bf a}_\delta(\pazocal{S}_{[0,\delta]}(\cA_{R,\epsilon})), \qquad {\bf a}=(\underline{a},\overline{a}) \in (\R^d)^2,
\end{equation}
is the probability that a Brownian bridge going from $\underline{a}$ to $\overline{a}$ within the time interval $[0,\delta]$ stays in $\cA_{R,\epsilon}$.
\noindent
To bound (\ref{eq:PcapMarkov}) from below we use the following three lemmas, whose proofs may be found in the appendix of \cite{EMP13}.
\begin{lemma}{\bf (Positive probability for a Brownian bridge to stay inside the annulus)}
\label{lem:stayinA}
There exists $c>0$ such that for all ${\bf a}\in\cAJ^2$, $\Phi_\delta({\bf a})\geq c$.
\end{lemma}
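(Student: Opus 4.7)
The plan is a standard compactness argument: we show that the map ${\bf a}\mapsto\Phi_\delta({\bf a})$ is lower semi-continuous and strictly positive on the compact set $\ovl{\cA}_{R,\bep}^2$, which, since $\bep<\epsilon$, sits comfortably inside the open set $\cA_{R,\epsilon}^2$. Defining $c$ as the infimum of $\Phi_\delta$ on this compactum then yields the claim.

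\textbf{Lower semi-continuity.} I would use the classical representation of the Brownian bridge
\begin{equation*}
W^{{\bf a}}_s = \Bigl(1 - \tfrac{s}{\delta}\Bigr)\underline{a} + \tfrac{s}{\delta}\overline{a} + \beta_s, \qquad s \in [0,\delta],
\end{equation*}
where $\beta$ is a standard Brownian bridge from $0$ to $0$ on $[0,\delta]$, independent of ${\bf a}$. For each realisation of $\beta$, the map ${\bf a}\mapsto W^{{\bf a}}$ is Lipschitz from $\R^{2d}$ into $C([0,\delta],\R^d)$ equipped with the uniform norm. Since $\cA_{R,\epsilon}$ is open, $\mathcal{S}_{[0,\delta]}(\cA_{R,\epsilon})$ is open in $C([0,\delta],\R^d)$, so the map ${\bf a}\mapsto\one\{W^{{\bf a}}\in\mathcal{S}_{[0,\delta]}(\cA_{R,\epsilon})\}$ is lower semi-continuous for each fixed $\beta$, and Fatou's lemma transfers this property to ${\bf a}\mapsto\Phi_\delta({\bf a})$.

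\textbf{Positivity and conclusion.} For each fixed ${\bf a}\in\ovl{\cA}_{R,\bep}^2$, since $\cA_{R,\epsilon}$ is open and path-connected (we are in $d\geq 2$), one can choose a continuous path $\gamma:[0,\delta]\to\cA_{R,\epsilon}$ with $\gamma(0)=\underline{a}$ and $\gamma(\delta)=\overline{a}$; compactness of $\gamma([0,\delta])$ then produces an $\eta>0$ such that the sup-norm $\eta$-tube around $\gamma$ lies inside $\cA_{R,\epsilon}$. The support theorem for Brownian bridges, asserting that the topological support of the bridge measure in the uniform topology equals the set of continuous paths with the prescribed endpoints, guarantees that $W^{{\bf a}}$ stays within uniform distance $\eta$ of $\gamma$ with strictly positive probability, hence $\Phi_\delta({\bf a})>0$. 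A positive lower semi-continuous function on a non-empty compact set attains a positive infimum, which we take to be $c$. The only delicate step is the support theorem; if one wishes to avoid it, an equivalent route is to build an explicit polygonal path from $\underline{a}$ to $\overline{a}$ via finitely many intermediate points near the sphere $\partial\cB(0,R)$, fine enough that each chord stays in $\cA_{R,\epsilon}$, and then bound each leg using the Gaussian transition density of the Brownian bridge. Either way, the hard part is bookkeeping rather than substance.
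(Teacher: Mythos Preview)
Your argument is correct and follows the same overall architecture as the paper's proof: show pointwise positivity of $\Phi_\delta$ on the compactum $\overline{\cA}_{R,\bep}^2$, establish enough regularity in ${\bf a}$, and conclude by compactness. The differences are only in the execution of the two ingredients. For regularity, the paper proves full continuity via the same affine representation you use, then invokes the Portmanteau theorem together with the observation that the bridge hits $\partial\cA_{R,\epsilon}$ while staying in $\overline{\cA}_{R,\epsilon}$ with probability zero; you instead extract lower semi-continuity directly from Fatou, which is slightly more economical since lower semi-continuity is all the compactness argument needs. For pointwise positivity, the paper argues by hand, splitting $[0,\delta]$ at $\delta/2$: on the first half it uses absolute continuity of the bridge with respect to Brownian motion to reach a small ball around $\overline{a}$ inside the annulus, and on the second half it uses the affine representation to keep the bridge near $\overline{a}$. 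Your appeal to the support theorem for the Brownian bridge packages the same idea more cleanly, and your polygonal-path alternative is essentially a discretised version of the paper's two-step construction. Neither route has a real advantage over the other; both are short and self-contained.
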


\begin{lemma}{\bf (Substitution of the Brownian bridge by a Brownian motion)]}
\label{lem:density}
Let $\tau > 0$ and $\delta\in (0,\tau)$. There exists $c>0$ such that for all ${\bf a} = (\underline{a},\overline{a}) \in\cAJ^2$,
\begin{equation}
\label{eq:density}
\frac{
\dd \bbP^{\bf a}_\tau
(
W_{[0,\tau-\delta]}\in \cdot \ , \ \pazocal{L}_{\delta,\tau-\delta}(\cAJ)
)
}{
\dd \bbP^{\underline{a}}_\tau
(
B_{[0,\tau-\delta]}\in \cdot \ , \ \pazocal{L}_{\delta,\tau-\delta}(\cAJ)
)
}
\geq c.
\end{equation}
\end{lemma}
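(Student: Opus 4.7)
The plan is to reduce the statement to an explicit pointwise estimate on the Gaussian heat kernel. Let $p_t(x,y) = (2\pi t)^{-d/2} \exp(-\|x-y\|^2/(2t))$ denote the $d$-dimensional Brownian transition density. The starting point is the classical identity relating a Brownian bridge to the underlying Brownian motion: for every $s \in (0,\tau)$, the law of the Brownian bridge from $\underline{a}$ to $\overline{a}$ in time $\tau$, restricted to the $\sigma$-algebra $\mathcal{F}_s$ generated by the path up to time $s$, is absolutely continuous with respect to $\bbP^{\underline{a}}_\tau|_{\mathcal{F}_s}$ with Radon--Nikodym derivative
\[
\frac{d \bbP^{{\bf a}}_\tau\big|_{\mathcal{F}_s}}{d \bbP^{\underline{a}}_\tau\big|_{\mathcal{F}_s}}(\omega) = \frac{p_{\tau-s}(\omega_s,\overline{a})}{p_\tau(\underline{a},\overline{a})}.
\]
This identity follows at once from the explicit form of the finite-dimensional marginals of the Brownian bridge.

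First I would apply this identity with $s = \tau-\delta$. This is permissible because the event $\mathcal{L}_{\delta,\tau-\delta}(\cA_{R,\bep})$ depends only on the values of the path at times $\delta$ and $\tau-\delta$, hence is $\mathcal{F}_{\tau-\delta}$-measurable. Consequently the Radon--Nikodym derivative of the two measures in (\ref{eq:density}) is exactly
\[
\frac{p_\delta(\omega_{\tau-\delta},\overline{a})}{p_\tau(\underline{a},\overline{a})},
\]
and it suffices to bound this ratio from below uniformly over $\underline{a},\overline{a} \in \cA_{R,\bep}$ and $\omega_{\tau-\delta} \in \cA_{R,\bep}$.

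Next I would bound the numerator from below and the denominator from above. Since $\cA_{R,\bep}$ has Euclidean diameter at most $2(R+\bep)$, we have $\|\omega_{\tau-\delta} - \overline{a}\| \leq 2(R+\bep)$, so
\[
p_\delta(\omega_{\tau-\delta},\overline{a}) \geq (2\pi \delta)^{-d/2} \exp\!\Bigl(-\tfrac{2(R+\bep)^2}{\delta}\Bigr),
\]
while trivially $p_\tau(\underline{a},\overline{a}) \leq (2\pi \tau)^{-d/2}$. Combining these yields the strictly positive constant
\[
c = \Bigl(\tfrac{\tau}{\delta}\Bigr)^{d/2} \exp\!\Bigl(-\tfrac{2(R+\bep)^2}{\delta}\Bigr),
\]
which depends only on $\tau, \delta, R, \bep$ and $d$, as required.

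There is no genuine obstacle here; the argument is essentially a one-step computation once the bridge/motion density formula is invoked. The only point requiring a little care is the measurability remark that justifies restricting the identity to $\mathcal{L}_{\delta,\tau-\delta}(\cA_{R,\bep})$: both sides of (\ref{eq:density}) are restrictions of the respective bridge and motion laws to this $\mathcal{F}_{\tau-\delta}$-measurable event, so the unrestricted Radon--Nikodym derivative serves as the restricted one as well.
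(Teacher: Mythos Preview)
Your proof is correct and follows essentially the same approach as the paper: both invoke the standard Radon--Nikodym identity $\dd\bbP^{\bf a}_\tau / \dd\bbP^{\underline a}_\tau\big|_{\mathcal F_{\tau-\delta}} = p_\delta(\omega_{\tau-\delta},\overline a)/p_\tau(\underline a,\overline a)$ and then bound the heat-kernel ratio uniformly using that all points involved lie in the bounded set $\cA_{R,\bep}$. The only difference is cosmetic---you compute an explicit constant, while the paper simply asserts the existence of uniform upper and lower bounds $c_1,c_2$ on $p(s,x,y)$ over $[\delta,\tau]\times\cA_{R,\bep}^2$ and takes $c=c_1/c_2$.
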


%\begin{lemma}
%\label{lem:intersection.ball}
%There exists $c>1$ and $t_{\bigcap}$ such that
%\begin{equation}
%P(B^{(1)}[0,t_{\cap}] \cap B^{(2)}[0,t_{\cap}] \neq \emptyset,\, B^{(1)}[0,t_{\cap}] \cup B^{(2)}[0,t_{\cap}] \subseteq B(0,c) \mid\, B^{(1)}_0 = -\frac12,  B^{(2)}_0 = \frac12) \geq 1/2.
%\end{equation}
%As a consequence, for any $\hep>0$,
%\begin{equation}\label{eq:inter.ball}
%P(B^{(1)}[0,\hep^2 t_{\cap}] \cap B^{(2)}[0,\hep^2t_{\cap}] \neq \emptyset,\, B^{(1)}[0,\hep^2t_{\cap}] \cup B^{(2)}[0,\hep^2t_{\cap}] \subseteq B(0,c\hep ) \mid\, B^{(1)}_0 = -\frac12 \hep ,  B^{(2)}_0 = \frac12 \hep ) \geq 1/2.
%\end{equation}
%\end{lemma}
%In the following we choose $\hep$ such that $c\hep < \epsilon$, with $c$ as in \eqref{eq:inter.ball}.\par
\begin{lemma}{\bf (Two Brownian motions restricted to be inside the annulus do intersect)}
\label{lem:intersect}
Let $\tau_1, \tau_2>0$ and $0 < \delta < \frac{\tau_1 \wedge \tau_2}{2}$. There exists $c>0$ such that for all $a_1,a_2\in\cAJ$
\begin{equation}
\label{eq:intersect}
\bbP^{a_1,a_2}_{\tau_1,\tau_2}
\Big(
\pazocal{L}^i_{\delta, \tau_i-\delta}(\cAJ) \ , \ \pazocal{S}^i_{[0,\tau_i-\delta]}(\pazocal{A}_{R,\varepsilon})
,\  1\leq i \leq 2, \quad \pazocal{I}_{[0,\tau_1-\delta],[0,\tau_2-\delta]}
\Big)
\geq c.
\end{equation}
\end{lemma}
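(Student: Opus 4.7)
The plan is to decompose the event in question into three sub-events living on consecutive disjoint time intervals, apply the Markov property to factorise the probability, and bound each factor from below by a strictly positive constant uniformly in $(a_1,a_2)\in\cA_{R,\bep}^2$. Fix a reference point $z_0$ on the middle sphere $\{\|x\|=R\}\subset \cA_{R,\bep}$ and a small $\eta>0$ such that $\cB(z_0,\eta)\subset \cA_{R,\bep}$.

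First, on the interval $[0,\delta/4]$ (the \emph{meeting phase}), I would impose that each $B^{(i)}$ enters $\cB(z_0,\eta)$ at time $\delta/4$ while remaining in $\cA_{R,\epsilon}$ throughout. Support-type theorems for Brownian motion (e.g.\ following an explicit polygonal tube inside $\cA_{R,\epsilon}$) give strictly positive probability of this event for every $a_i\in\cA_{R,\bep}$, and the probability depends continuously on $a_i$; compactness of $\cA_{R,\bep}$ then yields a uniform lower bound $c_1>0$. Second, on $[\delta/4,\delta/2]$ (the \emph{intersection phase}), conditionally on the meeting endpoints $b_1,b_2\in\overline{\cB}(z_0,\eta)$ and using independence of the two Brownian motions, I would require the two paths to intersect, both to remain in $\cA_{R,\epsilon}$, and each to end inside $\cB(z_0,\eta)$. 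The key input here is the classical fact that in dimensions $d\in\{2,3\}$ two independent Brownian motions started at bounded distance intersect on any fixed positive time interval with strictly positive probability (see e.g.\ M\"orters and Peres~\cite{MP10}); together with the annulus-confinement and endpoint constraints and compactness of $\overline{\cB}(z_0,\eta)^2$, this yields a uniform lower bound $c_2>0$. The intersection thus produced automatically witnesses $\cI_{[0,\tau_1-\delta],[0,\tau_2-\delta]}$, since $\delta/2<\tau_i-\delta$ by assumption on $\delta$.

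Finally, on $[\delta/2,\tau_i-\delta]$ (the \emph{traversal phase}), starting from $v_i\in\overline{\cB}(z_0,\eta)\subset \cA_{R,\bep}$, I would require $B^{(i)}$ to stay in $\cA_{R,\epsilon}$ throughout and to lie in $\cA_{R,\bep}$ at both the intermediate time $\delta$ and the terminal time $\tau_i-\delta$. For each fixed $\tau_i$ and $v_i$ this has strictly positive probability (again by a support-tube argument), and by compactness over $v_i$ it is uniformly bounded below by some $c_3(\tau_1,\tau_2,\delta)>0$. Multiplying the three factors via the Markov property applied successively at times $\delta/4$ and $\delta/2$ produces the desired uniform lower bound $c\geq c_1^2\,c_2\,c_3^2>0$.

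The main obstacle is the intersection phase in dimension $d=3$, where two independent Brownian motions meet only with positive probability strictly less than one, so one must verify that imposing simultaneously the annulus confinement and the endpoint-in-$\cB(z_0,\eta)$ condition still leaves a strictly positive intersection probability. I would handle this by exhibiting an explicit favourable scenario (e.g.\ force $B^{(1)}$ to traverse a short straight segment inside a tubular neighbourhood of $z_0$ contained in $\cA_{R,\bep}$ and require $B^{(2)}$ to cross this segment), together with a Cameron--Martin-type absolute-continuity argument comparing Brownian motion to a drifted version realising this scenario.
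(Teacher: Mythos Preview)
Your overall architecture matches the paper's proof closely: both bring the two Brownian motions into a small ball inside $\cA_{R,\bep}$, then force an intersection while confined, then let the paths run inside the annulus for the remaining time, factorising via the Markov property and using continuity plus compactness for uniformity. The time decomposition differs cosmetically (you split $[0,\delta]$ into two pieces and intersect before time $\delta$; the paper applies Markov at time $\delta$ and intersects afterwards), but this is inessential.

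The one substantive difference is in how the joint event \{intersection\} $\cap$ \{confinement\} is shown to have positive probability. You correctly flag this as the main obstacle in $d=3$, but your proposed resolution is not sound there: forcing $B^{(1)}$ into a thin tube around a segment and then asking $B^{(2)}$ to ``cross this segment'' fails because a one-dimensional set has zero capacity for three-dimensional Brownian motion, and even confining both paths to narrow tubes around transversally crossing segments does not force them to intersect in $d=3$. A Cameron--Martin shift changes the drift but does not by itself produce intersections of two independent paths. The paper circumvents this via a scale-invariance and subtraction argument: for two Brownian motions started in $\cB(0,1)$, one chooses $\tau_0$ large enough that the intersection probability on $[0,\tau_0]$ is close to $1$ (by Theorem~9.1 in \cite{MP10} and the monotonicity of Lemma~\ref{lem:perc_mono}), then $R_0$ large enough that the probability of exiting $\cB(0,R_0)$ before time $\tau_0$ is small, so that $\rho(\tau_0,R_0):=\inf\bbP(\text{intersect and stay in }\cB(0,R_0))>0$. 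Scaling by a factor $u_0$ so that $u_0 R_0$ fits inside $\cA_{R,\bep}$ and $u_0^2\tau_0$ fits in the available time interval then gives exactly the intersection-with-confinement bound you need. This replaces your tube/Cameron--Martin step and closes the gap.
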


\vspace{0.5cm}\noindent
We now explain how to get (\ref{eq:Pcap}) by applying  Lemmas \ref{lem:stayinA}--\ref{lem:intersect} to \eqref{eq:PcapMarkov}.
Since the $W_{\tau_i-\delta}$'s, $i\in\{1,2\}$, appearing in \eqref{eq:PcapMarkov} are in $\cAJ$, Lemma \ref{lem:stayinA} yields that, for some $c>0$, \eqref{eq:PcapMarkov} is greater or equal to
 \begin{equation}
 \label{eq:applystayinA}
 c^2 \
 \bbP^{{\bf a_1},{\bf a_2}}_{\tau_1,\tau_2}
 \Big(
 \pazocal{L}^i_{\delta, \tau_i-\delta}(\cAJ) \ , \ \pazocal{S}^i_{[0,\tau_i-\delta]}(\pazocal{A}_{R,\varepsilon})
 ,\  1\leq i \leq 2, \quad \pazocal{I}_{[0,\tau_1-\delta],[0,\tau_2-\delta]}
 \Big).
 \end{equation}
Next, a change of measure argument together with the bound on the Radon-Nikodym derivative provided in Lemma
\ref{lem:density} yields, for a possibly different constant $c>0$, that (\ref{eq:applystayinA}) is at least
\begin{equation}
\label{eq:applydensity}
c \
 \bbP^{\underline{a_1},\underline{a_2}}_{\tau_1,\tau_2}
 \Big(
 \pazocal{L}^i_{\delta, \tau_i-\delta}(\cAJ) \ , \ \pazocal{S}^i_{[0,\tau_i-\delta]}(\pazocal{A}_{R,\varepsilon})
,\  1\leq i \leq 2, \quad \pazocal{I}_{[0,\tau_1-\delta],[0,\tau_2-\delta]}
 \Big) ,
\end{equation}
which is positive by Lemma \ref{lem:intersect}. To deduce (\ref{eq:Pcap}) from it, it is enough to note that all the previous estimates are uniform in ${\bf a_1,a_2}\in \cAJ$. 
This finally yields the claim.

\begin{remark}
\label{rem:otherK}
If $n_0>2$ in \eqref{positive}, then one follows the same scheme and ends up connecting more than two excursions in an annulus.
Using the same proof as for two excursions, one can connect $B^{x_1}_{[\sigma_1^\ins,\sigma_1^\out]}$ to $B^{x_i}_{[\sigma_i^\ins,\sigma_i^\out]}$ during the time interval $[\sigma_1^\ins + (i-1)\delta/n_0, \sigma_1^\ins + i\delta/n_0]$, where $\delta \in (0,T)$, for all $1\leq i\leq n_0$. The same argument applies when we assume $N_\infty = k >2$ a.s.
\end{remark}

\subsubsection{Excluding $N_\infty = \infty$}
\label{S5.3.2}

Let us assume that the number $N_{\infty}$ of unbounded clusters in $\pazocal{O}_t$ is almost-surely equal to infinity. 
In the same fashion as in Section \ref{S5.2.2} we show that this leads to a contradiction.
For $z\in\bbZ^d$, we define the event
\begin{equation}
\label{eq:EN}
E_R(2Rz) := 
\left\{
\parbox{11cm}
{
there exists an unbounded cluster $C$ such that 
$C\cap \cpl{\cB_{\infty}(2Rz,R)}$ contains at least three unbounded clusters
and any unbounded cluster which intersects $\cB_{\infty}(2Rz,R)$
equals $C$
}
\right\}.
\end{equation}
Note that for all $k\geq 3$,
\begin{equation}
\label{eq:writeallintersect}
\begin{aligned}
E_R(2Rz) 
&\supseteq 
\left\{
\begin{array}{cc}
\mbox{there exists } k \mbox{ unbounded clusters in $C\cap \cpl{\cB_{\infty}(2Rz,R)}$
}\\
\mbox{ and all of them are connected inside $\cB_{\infty}(2Rz,R)$}
\end{array}
\right\}.
\end{aligned}
\end{equation}
Hence, Remark \ref{rem:otherK} and a short decomposition argument yield that the last event in (\ref{eq:writeallintersect})
has positive probability for $R$ large enough. Consequently, so does $E_R(2Rz)$. From now on, the proof works
similarly as that of Section \ref{S5.2.2}. Thus, to avoid repetitions we just point out the differences with the proof in Section \ref{S5.2.2}.\\
\noindent
The identification done in {\bf STEP 2.} of Section \ref{S5.2.2} has to be changed.
For each $z\in\Z^d$, we replace the Poisson point inside $\pazocal{B}_\infty(2Rz,R)$ that was used to connect the ``external'' clusters by what we call an \emph{intersection point}. This point is just an arbitrarily chosen point $\tilde{z}\in \cB_{\infty}(2Rz,R)$
contained in all the clusters. The collection of such points $\tilde{z}$ constitute the set $\pazocal{R}$ in the present case.
Finally, at the moment of applying Lemma \ref{lem:counting}, we define
$$
C_z^{i} = 
\left\{
x \in \{\pazocal{E}\cap\cB_{\infty}(0,LR)\}\cup \{\text{intersection points}\} \colon \ \con{x}{\Lambda_z^c}{\mathbf{B}^i_z}
\right\},
\qquad 
i=1, \ldots, n_z
$$
and
$$
 \pazocal{S}=\pazocal{B}_\infty(0,L R) \cap (\pazocal{E} \cup \{\text{intersection points}\}).
$$
The contradiction is now obtained in a similar fashion as in \eqref{eq:contradiction}, subject to minor modifications. We omit the details.

%%%%%%%%%%%%%%%%%%%%%%%%%%%%%%%%%%%%%%%%%%%%%%%%%%%%%%%%%%%%%%%%%%%%%%%
%%%%%%%%%%%%%%%%APPENDIX A%%%%%%%%%%%%%%%%%%%%%%%%%%%%%%%%%%%%%%%%%%%%%%
%%%%%%%%%%%%%%%%%%%%%%%%%%%%%%%%%%%%%%%%%%%%%%%%%%%%%%%%%%%%%%%%%%%%%%%%
\appendix
\section{Proof of Lemma \ref{lem:renormalization}}
\label{A1}
The proof consists of two steps. In the first step a coarse-graining procedure is introduced,
which reduces the problem of showing subcriticality of a continuous percolation model to showing subcriticality of an infinite range site percolation model
on $\Z^d$. This coarse-graining was essentially already introduced in \cite[Lemma 3.3]{MR96}, where $\rho$ was supposed to have a compact support.
To overcome the additional difficulties arising from the long range dependencies in the coarse-grained model, we use a renormalization scheme, which is 
similar to the
one in Sznitman \cite[Theorem 3.5]{S10}.
\medskip

\noindent
{\bf STEP 1. Coarse-graining.}\\
We fix $N\in\N$. For $n\in\N$, a sequence of vertices $z_0,z_1,\ldots, z_{n-1}$ in $\Z^d$ is called a $*$-path when $\|z_i-z_{i-1}\|_{\infty}=1$ for all
$i\in\{1,2,\ldots,n-1\}$.
Furthermore, a site $z = (z(j), 1\leq j \leq d)\in\Z^d$ is called open when there is an occupied cluster
$\Lambda$ of $\Sigma$ such that
\begin{equation}
\label{eq:opensite}
\begin{aligned}
\text{(i)}\ \Lambda\cap \prod_{j=1}^{d}[z(j)N, (z(j)+1)N)\neq \emptyset \mbox{   and   }
\text{(ii)}\ \Lambda\cap \cpl{\Bigg(\prod_{j=1}^{d}[(z(j)-1)N,(z(j)+2)N)\Bigg)}\neq
\emptyset.
\end{aligned}
\end{equation} 
Otherwise, $z$ is called closed. It was shown in \cite[Lemma 3.3]{MR96} that to obtain Lemma \ref{lem:renormalization} it suffices 
to show that
\begin{equation}
\label{eq:zerocluster}
\P_{\lambda,\rho}\Big(\mbox{$0$ is contained in an infinite $*$-path
of open sites}\Big)=0.
\end{equation}
To prove (\ref{eq:zerocluster}) we introduce a renormalization scheme.
\medskip

\noindent
{\bf STEP 2. Renormalization.}\\
$\bullet$ {\bf New notation and a first bound.} 
We start by introducing new notations.
We fix integers $R>1$ and $L_0>1$, both to be determined
and we introduce an increasing sequence of scales via 
\begin{equation}
L_{n+1}=R^{n+1}L_n,\quad n\in\N_0.
\end{equation}
Moreover, for $i\in\Z^d$, we introduce a sequence of increasing boxes via
\begin{equation}
\label{eq:renbox}
\begin{aligned}
& \Box_n(i) = \prod_{j=1}^{d}[i(j)L_n,(i(j)+1)L_n)\cap\Z^d\quad \mbox{and}\\
&\boxplus_n(i) = \prod_{j=1}^{d}[(i(j)-1)L_n,(i(j)+2)L_n)\cap\Z^d.
\end{aligned}
\end{equation}
We further abbreviate $\Box_n=\Box_n(0)$ and $\boxplus_n=\boxplus_n(0)$.
Thus, $\boxplus_n(i)$ is the union of boxes $\Box_n(j)$ such that $\|j-i\|_{\infty}\leq1$.
Moreover, for $n\in\N$, we introduce the events
\begin{equation}
\label{eq:cascade}
A_n(i)= \Big\{\mbox{there is a $*$-path of open sites from 
$\Box_n(i)$ to $\partial_{\mathrm{int}}\boxplus_n(i)$}\Big\} 
\end{equation}
and we write $A_n$ instead of $A_n(0)$.
Here, $\partial_{\mathrm{int}}\Delta$ refers to the inner boundary of a set $\Delta\subseteq\Z^d$
with respect to the $\|\cdot\|_{\infty}$-norm.
The idea of the renormalization scheme is to bound the probability of $A_{n+1}$
in terms of the probability of the intersection of events
$A_n(i)$ and $A_n(k)$, where $i\in\Z^d$ and $k\in\Z^d$ are thought to be far apart.
By the assumption on the radius distribution $\rho$, the events $A_n(i)$
and $A_n(k)$ can then be treated as being almost independent.
This will result in a recursion inequality which relates the probabilities of the events 
$A_n$, $n\in\N$, at different scales. For that, we fix $n\in\N$ and let
\begin{equation}
\label{eq_H1H2}
\begin{aligned}
&\cH_1=\Big\{i\in\Z^d\, :\, \Box_n(i)\subseteq \Box_{n+1},
\Box_n(i)\cap \partial_{\mathrm{int}} \Box_{n+1}\neq \emptyset\Big\}\quad \mbox{and}\\
&\cH_2=\Big\{k\in\Z^d\,:\, \Box_n(k)\cap\Big\{z\in\Z^d\,:\, \mathrm{dist}(z,\Box_{n+1})
=\frac{L_{n+1}}{2}\Big\}\neq\emptyset\Big\}.
\end{aligned}
\end{equation}
Here, $\mathrm{dist}(z,\Box_{n+1})$ denotes the distance of $z$ from the set $\Box_{n+1}$
with respect to the supremum norm. Note that here and in the rest of the proof, for notational convenience, we pretend that expressions like $L_{n+1}/2$ are integers. 
Observe that if $A_{n+1}$ occurs, then there are $i\in\cH_1$ and $k\in\cH_2$
such that both $A_{n}(i)$ and $A_{n}(k)$ occur.
Hence,
\begin{equation}
\label{eq:recurs}
\begin{aligned}
\P_{\lambda,\rho}(A_{n+1})
&\leq \sum_{i\in\cH_1,k\in\cH_2} \P_{\lambda,\rho}(A_{n}(i)\cap A_{n}(k))\\
&\leq c_1R^{2(d-1)(n+1)}\sup_{i\in\cH_1,k\in\cH_2}\P_{\lambda,\rho}(A_n(i)\cap A_n(k)),
\end{aligned}
\end{equation} 
where $c_1=c_1(d)>0$ is a constant which depends only on the dimension.
\medskip

\noindent
$\bullet${\bf Partition of $A_{n}(i)\cap A_n(k)$.}
We fix $i\in\cH_1$ and $k\in\cH_2$. Let $z\in\boxplus_n(i)$
and note that to decide if $z$ is open, it suffices to know the trace of
the Boolean percolation model on
\begin{equation}
\label{eq:trace}
\prod_{j=1}^{d}[(z(j)-1)N,(z(j)+2)N).
\end{equation}
In a similar fashion one sees that the area which determines $A_n(i)$ is given by
\begin{equation}
\label{eq:determinei}
\begin{aligned}
&\prod_{j=1}^{d}[((i(j)-1)L_n-1)N,((i(j)+2)L_n+2)N]\\
&\subseteq \prod_{j=1}^{d}[(i(j)-2)L_nN,(i(j)+3)L_nN)
\stackrel{\mathrm{def}}{=} \mathrm{DET}(\boxplus_n(i))
\end{aligned}
\end{equation}
and likewise for $A_n(k)$.
Here, we used that by our choice of $R$ and $L_0$ the relation $L_n\geq 2$ holds 
for all  $n\in\N$. We introduce 
\begin{equation}
\cD(x,r(x)) := \{\cB(x,r(x)) \cap  \mathrm{DET}(\boxplus_n(i))\neq \emptyset,\, \cB(x,r(x)) \cap  \mathrm{DET}(\boxplus_n(k))\neq
\emptyset \}
\end{equation}
and
\begin{equation}
\label{eq:crossball}
U_n(i,k) := \bigcup_{x\in\cE}\cD(x,r(x)),
\end{equation}
so that
\begin{equation}
\label{eq:condition}
\begin{aligned}
\P_{\lambda,\rho}(A_n(i)\cap A_n(k))
&= \P_{\lambda,\rho}(A_n(i)\cap A_n(k)\big|\cpl{U_n(i,k)})
\ \P_{\lambda,\rho}(\cpl{U_n(i,k)})\\
&\ +\P_{\lambda,\rho}(A_n(i)\cap A_n(k)\big|U_n(i,k))
\ \P_{\lambda,\rho}(U_n(i,k)).
\end{aligned}
\end{equation}
\medskip

\noindent
$\bullet${\bf Analysis of the first term on the right-hand side of (\ref{eq:condition}).}
We claim that under $\P_{\lambda,\rho}(\cdot\big|\cpl{U_n(i,k)})$ the 
events $A_n(i)$ and $A_n(k)$ are independent. To see that, note that the Poisson point process $\chi$ on $\R^d\times[0,\infty)$ with intensity measure $\nu=(\lambda\times\mathrm{Leb}_d)\otimes\rho$ (see Section \ref{S2.1}) is a 
Poisson point process under $\P_{\lambda,\rho}(\cdot | \cpl{U_n(i,k)})$ with intensity measure 
\begin{equation}
\label{eq:intensitycond}
\one\{\mbox{there is no } (x,r(x))\in\chi \mbox{ such that } \cD(x,r(x)) \mbox{ occurs}\}\times \nu.
\end{equation}
However, on $ \cpl{U_n(i,k)}$, the events $A_n(i)$ and $A_n(k)$ depend on disjoint subsets of $\R^d\times [0,\infty)$.
Consequently, they are independent under  $\P_{\lambda,\rho}(\cdot\big|\cpl{U_n(i,k)})$.
Hence,
\begin{equation}
\label{eq:condindep}
\begin{aligned}
\P_{\lambda,\rho}&(A_n(i)\cap A_n(k)\big|\cpl{U_n(i,k)})
\ \P_{\lambda,\rho}(\cpl{U_n(i,k)})\\
&=\P_{\lambda,\rho}(A_n(i)\big|\cpl{U_n(i,k)})\ \P_{\lambda,\rho}(A_n(k)\big|\cpl{U_n(i,k)})\ \P_{\lambda,\rho}(\cpl{U_n(i,k)})\\
&\leq \P_{\lambda,\rho}(A_n)^2\ \P_{\lambda,\rho}(\cpl{U_n(i,k)})^{-1}.
\end{aligned}
\end{equation}
For the last inequality in (\ref{eq:condindep}) we used the fact that
$\P_{\lambda,\rho}(A_n(i))$ does not depend on $i\in\Z^d$.
\medskip

\noindent
$\bullet${\bf Analysis of the second term on the right-hand side of (\ref{eq:condition}).}
To bound the second term on the right-hand side of (\ref{eq:condition})
it will be enough to bound $\P_{\lambda,\rho}(U_n(i,k))$ from above, since the other
term is less than one.
Note that
\begin{equation}
\label{eq:ballcross}
\begin{aligned}
\P_{\lambda,\rho}(U_n(i,k))
\leq \sum_{\ell\in 3\Z^d} \P_{\lambda,\rho}
\Bigg(
\begin{array}{cl}
\exists x\in \cE \cap N\boxplus_{n+1}(\ell) :& \cB(x,r(x)) \cap \mathrm{DET}(\boxplus_n(i)) \neq \emptyset\\ 
\mbox{and } &\cB(x,r(x)) \cap \mathrm{DET}(\boxplus_n(k)) \neq \emptyset
\end{array}
\Bigg).
\end{aligned}
\end{equation}
Here, the set $N\boxplus_{n+1}(\ell)$ is the set $\{x\in\R^d\,:\, x=zN, z\in\boxplus_{n+1}(\ell)\}$.
We first treat the term $\ell=0$ in the sum in (\ref{eq:ballcross}).
Note that for all $n\in\N$,
\begin{equation}
\label{eq:dist}
\mathrm{dist}(\mathrm{DET}(\boxplus_n(i)),
\mathrm{DET}(\boxplus_n(k))\geq \Big(\frac{L_{n+1}}{2}-8L_n\Big)N
\geq \frac{L_{n+1}}{3}N,
\end{equation}
provided $R$ and $L_0$ are chosen accordingly.
Thus, if there is a Poisson point whose corresponding ball intersects $\mathrm{DET}(\boxplus_n(i))$ and $\mathrm{DET}(\boxplus_n(k))$,
then its radius is at least $L_{n+1}N/6$.
This yields
\begin{equation}
\label{eq:l0}
\begin{aligned}
\P_{\lambda,\rho}&\Bigg(\begin{array}{cl}
\exists x\in \cE \cap N\boxplus_{n+1}:& \cB(x,r(x)) \cap \mathrm{DET}(\boxplus_n(i)) \neq \emptyset\\ 
\mbox{and } &\cB(x,r(x)) \cap \mathrm{DET}(\boxplus_n(k)) \neq \emptyset
\end{array}\Big)\\
&\leq \P_{\lambda,\rho}\Big(\exists x\in \cE \cap N\boxplus_{n+1}: r(x) \geq L_{n+1}N/6 \Bigg).
\end{aligned}
\end{equation}
% We write for $x\in\pazocal{E}$ 
% \begin{equation}
% \label{eq:calL}
% \cL(x) = \Big\{\mbox{The ball associated to $x$ has radius at least $L_{n+1}N/6$}\Big\}
% \end{equation}
% and we note that for $x\in\pazocal{E}$ the relation $\P_{\lambda,\rho}(\cL(x))
% =\rho([L_{n+1}N/6,\infty))$ holds.
We may bound the right-hand side of (\ref{eq:l0}) by
\begin{equation}
\label{eq:calculate}
1-\exp\big\{-\lambda\mathrm{Leb}_d(N\boxplus_{n+1})
\rho([L_{n+1}N/6,\infty))\big\},
\end{equation}
which is at most $\lambda \mathrm{Leb}_d(N\boxplus_{n+1})
\rho([L_{n+1}N/6,\infty))$.
By our assumption on the radius distribution, for $R$ and $L_0$ large enough,
there is a constant $c_2=c_2(\rho)>0$ such that
the last term may be bounded from above by
$\lambda(3L_{n+1}N)^de^{-c_2L_{n+1}N/6}$.
The case $\ell > 0$ is treated in a similar manner.
Thus, the left-hand side of (\ref{eq:ballcross})
is at most
\begin{equation}
\label{eq:upbound}
\lambda (3L_{n+1}N)^de^{-c_2L_{n+1}N/6}
+ \sum_{m=1}^{\infty}\sumtwo{\ell\in 3\bbZ^d}{\|\ell\|_\infty = m}\lambda (3L_{n+1}N)^d
\times e^{-c_2(3(m-1)+1/2)L_{n+1}N}.
\end{equation}
This is bounded from above by
\begin{equation}
\label{eqfinbound}
c_3\lambda (3L_{n+1}N)^de^{-c_2L_{n+1}N/6}
\end{equation}
for some constant $c_3>0$ which is independent of $R$, $L_0$ and $N$.
Hence, we have bounded the second term on the right-hand side of (\ref{eq:condition}).
In particular, by the above considerations, we deduce that for all $n\in\N$ and for a suitable choice
of $R$ and $L_0$, $\P_{\lambda,\rho}(\cpl{U_n(i,k)})\geq 1/2$.
\medskip

\noindent
$\bullet${\bf Analysis of the recursion scheme.}
Equation (\ref{eq:recurs}) in combination with (\ref{eq:condition}) 
and the arguments following it show that
\begin{equation}
\label{eq:Anbound}
\P_{\lambda,\rho}(A_{n+1})
\leq 2c_1R^{2(d-1)(n+1)}\Big(\P_{\lambda,\rho}(A_n)^2 
+ c_3\lambda (3L_{n+1}N)^de^{-c_2L_{n+1}N/6}\Big).
\end{equation}
%To deduce the desired result, we first show with the help of (\ref{eq:Anbound})  that $\P_{\lambda,\rho}(A_n)$ being small implies that
%$\P_{\lambda,\rho}(A_{n+1})$
%is small as well. As a final step it then remains to show that $\P_{\lambda,\rho}(A_0)$ is already small. 
%We now make this idea more precise.
To proceed, we put 
\begin{equation}
a_n= 2c_1R^{2(d-1)n}\P_{\lambda,\rho}(A_n),\quad n\in\bbN.
\end{equation}

\begin{claim}
\label{cl:recurs}
For $R$ large enough, for all $n\in \bbN$ and for all $L_0\geq 2R^{4(d-1)+1}$, the inequality $a_n\leq L_n^{-1}$ implies that
$a_{n+1}\leq L_{n+1}^{-1}$.
\end{claim}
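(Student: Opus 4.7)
The plan is to feed the bound \eqref{eq:Anbound} into the definition of $a_n$ and unfold the two resulting contributions separately: a ``squared'' term coming from $\P_{\lambda,\rho}(A_n)^2$ and an ``error'' term coming from the ball-crossing contribution. Concretely, substituting $\P_{\lambda,\rho}(A_n)=a_n/(2c_1 R^{2(d-1)n})$ into \eqref{eq:Anbound} gives
\begin{equation*}
a_{n+1}
\;\leq\; R^{4(d-1)}\,a_n^2 \;+\; (2c_1)^2 R^{4(d-1)(n+1)}\,c_3\lambda\,(3L_{n+1}N)^d\, e^{-c_2 L_{n+1}N/6}.
\end{equation*}
The goal is to show that each of the two terms is bounded by $\tfrac{1}{2}L_{n+1}^{-1}$.

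For the squared term, the induction hypothesis $a_n\leq L_n^{-1}$ yields $R^{4(d-1)}a_n^2\leq R^{4(d-1)} L_n^{-2}$. Since $L_{n+1}=R^{n+1}L_n$, asking $R^{4(d-1)}L_n^{-2}\leq \tfrac{1}{2}L_{n+1}^{-1}$ is the same as $L_n\geq 2R^{4(d-1)+n+1}$. Using $L_n=L_0\prod_{k=1}^{n}R^k = L_0 R^{n(n+1)/2}$, this reduces to $L_0\geq 2R^{4(d-1)+n+1-n(n+1)/2}$. The exponent $4(d-1)+n+1-n(n+1)/2$ is largest at $n=0$, where it equals $4(d-1)+1$. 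Hence the assumption $L_0\geq 2R^{4(d-1)+1}$ in the claim is exactly what is needed to dispose of the squared contribution for every $n\in\N$.

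For the error term, write the factor $L_{n+1}^{d+1}R^{4(d-1)(n+1)}$ as $L_0^{d+1}R^{(d+1)(n+1)(n+2)/2+4(d-1)(n+1)}$. This grows only exponentially in $n$ (with $R$ fixed and $L_0,N$ fixed), while the competing factor $e^{-c_2 L_{n+1}N/6} = \exp\{-c_2 N L_0 R^{(n+1)(n+2)/2}/6\}$ decays double-exponentially in $n$. Hence, for $R$ sufficiently large, the product is not only summable in $n$ but in fact bounded by $\tfrac{1}{2}L_{n+1}^{-1} = \tfrac{1}{2L_0}R^{-(n+1)(n+2)/2}$ for all $n\in\N$; the worst case is $n=0$, which gives a finite number of extra conditions of the form ``$R$ large enough'' (depending on $c_1,c_2,c_3,\lambda,d,N,L_0$). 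Absorbing all these into a single threshold on $R$ completes the step.

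Adding the two bounds yields $a_{n+1}\leq L_{n+1}^{-1}$, which is the desired conclusion. The calculation is essentially elementary; the only subtle point is to check that the quadratic term gives the $n$-uniform condition $L_0\geq 2R^{4(d-1)+1}$ (and not a more stringent, $n$-dependent one), which is why the exponent $4(d-1)+1$ appears in the statement. The error term, being doubly exponentially small, is by comparison the easier of the two to control and requires no delicate tuning beyond taking $R$ large.
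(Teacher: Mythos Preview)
Your approach is the same as the paper's: substitute \eqref{eq:Anbound} into the definition of $a_{n+1}$, split into the squared term and the error term, and show each is at most $\tfrac12 L_{n+1}^{-1}$. Two small corrections. First, the prefactor $L_{n+1}^{d+1}R^{4(d-1)(n+1)}$ grows like $R^{cn^2}$, not ``only exponentially'' in $n$ --- but your comparison still works since the decay $e^{-c_2 L_{n+1}N/6}$ is of the even faster order $\exp\{-cR^{c'n^2}\}$. Second, and more importantly, the claim quantifies $L_0$ \emph{after} $R$, so your threshold on $R$ must not depend on $L_0$; to fix this, bound $R^{4(d-1)(n+1)}\leq L_{n+1}^{4(d-1)}$ (via $R^{n+1}=L_{n+1}/L_n\leq L_{n+1}$), so that the error term is at most $C\, L_{n+1}^{5d-3}e^{-c_2 L_{n+1}N/6}$, and then use $L_{n+1}\geq L_1 = RL_0 \geq 2R^{4d-2}$ to obtain a threshold on $R$ that is uniform in $L_0\geq 2R^{4(d-1)+1}$.
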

\begin{proof}
Let $n\in\N$ and assume that $a_n\leq L_n^{-1}$. Then,
\begin{equation}
\label{eq:an}
\begin{aligned}
a_{n+1}
&=2c_1R^{2(d-1)(n+1)} \P_{\lambda,\rho}(A_{n+1})\\
&\leq 4c_1^2R^{4(d-1)(n+1)}\Big[\P_{\lambda,\rho}(A_n)^2 + c_3\lambda
(3L_{n+1}N)^de^{-c_2L_{n+1}N/6}\Big]\\
&= a_n^2R^{4(d-1)} +4c_1^2c_3R^{4(d-1)(n+1)}\lambda(3L_{n+1}N)^de^{-c_2L_{n+1}N/6}.
\end{aligned}
\end{equation}
Thus, it is enough to show that
\begin{equation}
\label{eq:needtoshow}
a_n^2R^{4(d-1)}\leq (2L_{n+1})^{-1}
\quad\mbox{and}\quad
4c_1^2c_3R^{4(d-1)(n+1)}(3L_{n+1}N)^de^{-c_2L_{n+1}N/6}
\leq (2L_{n+1})^{-1}.
\end{equation}
For that, note that by our assumption on $a_n$,
\begin{equation}
\label{eq:1term}
a_n^2R^{4(d-1)}2L_{n+1}
\leq 2L_n^{-2}R^{4(d-1)}L_{n+1}
=2R^{4(d-1)}\frac{R^{n+1}}{R^nL_{n-1}}
\leq 2R^{4(d-1)+1}L_0^{-1}.
\end{equation}
Thus, choosing $L_0\geq 2R^{4(d-1)+1}$ yields the first desired inequality.
The second term on the right-hand side of (\ref{eq:an}) may be bounded from above using
similar considerations. This yields Claim \ref{cl:recurs}.
\end{proof}
\noindent
Hence, to use the claim, we need that $\P_{\lambda,\rho}(A_0)\leq L_0^{-1}$.
Observe that
\begin{equation}
\label{eq:A0}
\begin{aligned}
\P_{\lambda,\rho}(A_0)
&= \P_{\lambda,\rho}\Big(
\mbox{there is a $*$-path of open sites from $[0,L_0)^d$
to $\partial_{\mathrm{int}}[-L_0,2L_0)^d$}\Big)\\
&\leq \P_{\lambda,\rho}\Big(
\mbox{there is $z\in\partial_{\mathrm{int}}[-L_0,2L_0)^d$, which is open}\Big)\\
&\leq c_4L_0^{d-1} \P_{\lambda,\rho}(\mbox{$0$ is open}),
\end{aligned}
\end{equation}
where $c_4=c_4(d)>0$ does only depend on the dimension.
Equation (3.64) in \cite{MR96}
shows that 
\begin{equation}
\label{eq:opencross}
\P_{\lambda,\rho}(\mbox{$0$ is open})
\leq 2d\P_{\lambda,\rho}(\mathrm{CROSS}(N,3N,\ldots, 3N;\bbR^d)).
\end{equation}
Therefore, if the right-hand side of (\ref{eq:opencross}) is smaller than $(4dc_1c_4L_0^{d})^{-1}$,
we get from (\ref{eq:A0}) that $\P_{\lambda,\rho}(A_0)\leq (2c_1L_0)^{-1}$, thus $a_0\leq L_0^{-1}$.
%This, in combination with Claim \ref{cl:recurs} and the observation that an infinite $*$-path of open sites containing zero implies the events $A_n$ for all $n\in\N$,
%finally yields
Note that an infinite $*$-path of open sites containing zero implies $A_n$ for all $n\in\bbN$. Thus, Claime \ref{cl:recurs} finally yields
\begin{equation}
\label{eq:nopath}
\P_{\lambda,\rho}\Big(\mbox{$0$ is contained in an infinite $*$-path
of open sites}\Big)
\leq \lim_{n\to\infty}\P_{\lambda,\rho}(A_n) = 0.
\end{equation}
Consequently, Lemma \ref{lem:renormalization} holds for $\varepsilon\leq (4dc_1c_4L_0^{d+1})^{-1}$.

%%%%%%%%%%%%%%%%%%%%%%%%%%%%%%%%%%%%%%%%%%%%%%%%%%%%%%%%%%%%%%%%%%%%%%%%%%%%%%
%%%%%%%%%%%%%%%%%%%%%%%%% The bibliography %%%%%%%%%%%%%%%%%%%%%%%%%%%%%%%%%%%
%%%%%%%%%%%%%%%%%%%%%%%%%%%%%%%%%%%%%%%%%%%%%%%%%%%%%%%%%%%%%%%%%%%%%%%%%%%%%%
%\normalsize

\def\cprime{$'$}

\end{document}